\theoremstyle{definition}
\newtheorem{theorem}{Theorem}[section]
\newtheorem{lemma}[theorem]{Lemma}
\newtheorem{definition}[theorem]{Definition}
\newtheorem{remark}[theorem]{Remark}
\numberwithin{equation}{section}
\def\i{{\bf i}}
\def\j{{\bf j}}
\def\k{{\bf k}}
\def\uq{\underline{q}}
\begin{document}
\setcounter{page}{1}

\title[Further Aspects of the Fueter Mapping Theorem]{Further Aspects of the Fueter Mapping Theorem}

\author[B. Dong, T. Qian]{Baohua Dong$^1$ and Tao Qian$^2$}

\address{$^1$Department of Mathematics, Nanjing University of Information Science and Technology, Nanjing {\rm 210044}, China}
\email{\textcolor[rgb]{0.00,0.00,0.84}{baohua\_dong@126.com}}
\address{$^2$Faculty of Science and Technology, University of Macau, Macau, China.}
\email{\textcolor[rgb]{0.00,0.00,0.84}{tqian1958@gmail.com}}


\subjclass[2010]{Primary 30G35.}

\keywords{Fourier multipliers; holomorphic intrinsic functions; axially monogenic functions; Fueter's theorem.}

\date{}

\begin{abstract}
There have been several generalizations of the classical Fueter theorem on quaternions.
The generalization to the Euclidean spaces $\mathbb{R}^{n+1}$ for $n$ being odd positive integers is due to M. Sce (1957).
It asserts that for a holomorphic intrinsic function $f_0(z)$ defined in a suitable domain in the complex plane, the image of the Fueter mapping
\[ f_0 \to (-\Delta)^{\frac{n-1}{2}}{\vec{f_0}}(x)\]
is Clifford left- and right-monogenic,
where $\vec{f_0}$ is the induced function, $x=x_0+\underline{x},$ is the Clifford variable in the induced domain,
$ x_0\in \mathbb{R}, \ \underline{x}=x_1{\bf e}_1+\cdots +x_n{\bf e}_n\in\mathbb{R}^n,$ and  $\Delta$ is the Laplacian for the $n+1$ real variables $x_0, x_1,...,x_n.$
The proof of the generalization of Sce, like that of Fueter, is based on computation of the pointwise differential operator $\Delta^{(n-1)/2}.$
In the pattern of Sce, T. Qian (1997) generalized the Fueter theorem to the cases $n$ being even positive integers using the Fourier multiplier representation of the operator $(-\Delta)^{(n-1)/2}.$
In this paper we first show that for $n$ being even the images of the Fueter mapping, as monogenic functions, like for the odd $n$ cases proved through computation on the pointwise differential operator, are also of the axial type (the Axial Form Theorem).
Due to a recent result of B. Dong, K. I. Kou, T. Qian, I. Sabadini (2016),
we know that the Fueter mapping is surjective on the set of all left- and right-monogenic functions of the axial type in axial domains (the Surjectivity Theorem).
The second part results of this paper address the action of the Fueter mapping on the monomials $f^{(l)}_0(z)=z^l, l=0, \pm 1, \pm 2,...,$ in one complex variable.
In generalizing the Fueter and the Sce theorems to the even dimensions $n,$ Qian used the following mapping $\tau$  (applicable for odd dimensions as well):
$$\tau (f^{(-k)}_0)=\mathcal{F}^{-1}((-2\pi |\cdot |)^{n-1}\mathcal{F}({\vec{f_0^{(-k)}}})), \ \ \ \tau (f^{(n-2+k)}_0)=I(\tau (f^{(-k)}_0)), $$
where $k$ is any positive integer, $\mathcal{F}$ is the Fourier transformation in $\mathbb{R}^{n+1},$ $I$ is the Kelvin inversion.
We show that on the monomials the action of the mapping $\tau$ defined through the Kelvin inversion coincides with that of the Fueter mapping defined through the Fourier transform (the Monomial Theorem).
In the mentioned 1997 paper this result is proved for the case $n$ being odd integers.
The Monomial Theorem further implies that the extended mapping $\tau$ on Laurent series of real coefficients is identical with the Fueter mapping defined through pointwise differentiation or the Fourier multiplier.

\end{abstract}\maketitle

\section{Introduction}

The classical Fueter theorem asserts that every holomorphic intrinsic function of one complex variable induces a quaternionic monogenic function (\cite{f}).
The formulation is as follows of the classical Fueter theorem and the related concepts.
We say that a set is intrinsic if it is an open subset of the complex plane $\mathbb{C}$ and symmetric with respect to the real axis.
By a holomorphic intrinsic function we mean a holomorphic function $f_0$ defined in an intrinsic domain and satisfying the properties $\overline{f_0(z)}=f_0(\overline{z}).$
Every holomorphic function $f_0(z)$ has the form
\begin{align*}
f_0(z)=u(x_0,y_0)+{\rm i}v(x_0,y_0),
\end{align*}
where $u(x_0,y_0)$ and $v(x_0,y_0)$ are real valued functions.
The condition $\overline{f_0(z)}=f_0(\overline{z})$ is equivalent with $u(x_0,y_0)=u(x_0,-y_0)$ and $v(x_0,y_0)=-v(x_0,-y_0).$
In particular, $v(x_0,0)=0.$

Let $O$ be an intrinsic set and
$$f_0(z)=u(s,t)+{\rm i}v(s,t)$$
be a holomorphic intrinsic function defined in $O.$
Denote, for $q\in\vec{O_q},$
\begin{align*}
\vec{f_0}(q):=u(q_0, |\uq |)+{\uq \over |\uq |}v(q_0, |\uq|),
\end{align*}
where
\begin{align*}
\vec{O_q}:=\{q =q_0+\uq\in \mathbb{H}\, |\,  (q_0, |\uq|) \in O\},
\end{align*}
and $q_0$, $\uq :=q_1 \i +q_2 \j +q_3 \k$ denote the real and the imaginary parts of the quaternion number $q$, respectively.
Here $\vec{f_0}$ is said to be the induced function from $f_0,$ and $\vec{O_q}$ the induced set from $O.$
Then, in $\vec{O_q}$, the function $\Delta_q\vec{f_0}$
is both left and right monogenic with respect to the quaternionic Cauchy-Riemann operator
$D_q=\partial_{q_0}+ \i \partial_{q_1} +\j \partial_{q_2}+ \k\partial_{q_3},$
where
$\Delta_q=\partial_{q_0}^2+\partial_{q_1}^2+\partial_{q_2}^2+\partial_{q_3}^2$
denotes the Laplace operator in the four real variables $q_0, q_1, q_2, q_3.$
The conclusion of the classical Fueter theorem can be expressed as
$$D_q\left(\Delta_q\vec{f_0}\right)=\left(\Delta_q\vec{f_0}\right)D_q=0.$$

Its higher dimensional generalizations to the Euclidean spaces $\mathbb{R}^{n+1},$ for $n$ being odd, also called the Fueter theorem, were obtained by M. Sce in \cite{s}.
Later, in \cite{q1,q3}, T. Qian generalized the Fueter theorem to the even dimensional Euclidean spaces $\mathbb{R}^{n+1}.$
Qian used the following mapping $\tau$ to extend the results of Fueter and Sce to the even dimensions $n$ (applicable to odd dimensions as well), namely, for $f^{(l)}_0(z)=z^l, l=0, \pm 1, \pm 2,...,$
\begin{align*}
&\tau\left(f^{(-k)}_0\right)=\mathcal{F}^{-1}\left((-2\pi |\cdot |)^{n-1}\mathcal{F}\left({\vec{f_0^{(-k)}}}\right)\right),\\
&\tau\left(f^{(n-2+k)}_0\right)=I\left(\tau (f^{(-k)}_0)\right),
\end{align*}
where $k$ is any positive integer, $\mathcal{F}$ is the Fourier transformation in $\mathbb{R}^{n+1},$ $I$ is the Kelvin inversion.
For the odd dimensions, Qian's generalization is consistent with the Sce's based on pointwise differentiation.
For further generalizations and variations of the Fueter theorem we refer the reader to \cite{CLSS,dkqs,KQS2002, P2008, PQS2006, QS2003,S2000}, etc.,
and the references therein.

Fueter theorems have crucial applications in the study of functional calculus of Dirac operators (T. Qian)
and n-tuple non-commutative operators (B. Jefferies, I. Sabadini, F. Colombo, et al.), see \cite{css,j,q4,q5,q6} and the references therein.
In the study of functional calculus of the Dirac operator on the sphere and its Lipschitz perturbations, for instance,
 the Fueter theorem is used to establish a
singular integral operator algebra in the context (\cite{q0,q2}).
To the authors knowledge, there is a variety of methods to establish the analogous theories for unbounded Lipschitz graphs of one and higher dimensions
 (\cite{CMcM, CPS, GLQ, LMcQ, LMcS}), but for bounded domains, including Lipschitz perturbations of higher dimensional unit spheres, the Fueter theorem approach seems to be the unique one.

To further study the Fueter type correspondence between one complex variable holomorphic intrinsic functions and quaternionic or Clifford algebra monogenic functions,
the related injectivity and surjectivity properties are certainly among the most interested aspects.
For every holomorphic intrinsic function $f_0,$ denote the Fueter mapping $\beta$ by
\begin{align*}
\beta(f_0)(x)=(-\Delta)^{\frac{n-1}{2}}\vec{f}_0,\  x\in\mathbb{R}^{n+1},
\end{align*}
where $\vec{f}_0$ is induced from $f_0$ and is of the slice hyperholomorphic type (see below).
Since the Fueter mapping involves differential operator, the injectivity property does not hold.
Establishing surjectivity amounts to asserting the range of the Fueter mapping.
Fueter's original proof and Sce's proof, based on computation involving a particular pointwise differential operator,
automatically gives that the resulted monogenic functions are of the axial form.
In the present paper we prove that the method of Fourier multiplier we used to treat even dimension indices $n$
also results in the axial form of monogenic functions (below called the Fueter Mapping Axial Form Theorem).
In addition, due to a recent result in \cite{dkqs},
we have that the Fueter mapping is surjective onto the set of all axially monogenic functions (the Fueter Mapping Surjectivity Theorem).
The same result was obtained by F. Colombo, I. Sabadini and F. Sommen in \cite{csso,csso1} but restricted to odd dimensions $n.$

In studying the Fueter type correspondence monogenic monomials are constructively important.
The monogenic monomials are defined through the mapping $\tau$ in Qian's method, namely,
$$P^{(-k)}:=\tau\left(f^{(-k)}_0\right),$$
where $k$ is any positive integer; and $P^{(k-1)}$ is defined to be the Kelvin inversion of $P^{(-k)},$ that is
$$P^{(k-1)}:=\tau\left(f^{(n-2+k)}_0\right)=I\left(\tau\left(f^{(-k)}_0\right)\right).$$
For every negative integer $l,$ it was easily shown that for any dimension index $n$ there holds
 $\beta(z^l)=\tau (z^l).$
As a consequence of the Fueter mapping axial form theorem,
we have the Fueter Mapping Monomial Theorem, i.e., for any $n$ there also holds
$$\beta(z^{k+n-2})=\tau(z^{k+n-2}).$$
The last result for the $n$ being odd case was proved in \cite{q1} through complicated computation.
The monomial theorem implies that the extended mapping $\tau$ on Laurent series of real coefficients in one complex variable coincides
with what is defined through the Fueter mapping $\beta$ based on the pointwise differentiation (odd dimensions)
or the Fourier multiplier in the distribution sense (even dimensions).

The structure of the paper is as follows.
Section $2$ contains preliminary knowledge of Clifford analysis, Fourier multiplier and so on.
In Section $3,$ we obtain the Fueter mapping is surjective from the set of holomorphic intrinsic functions to the collection of axially monogenic functions.
Section $4$ contains the Fueter mapping monomial theorem which says that the following equations hold
\begin{align*}
\beta(z^{k+n-2})=\tau(z^{k+n-2})=P^{(k-1)}(x),\ x\in\mathbb{R}^{n+1},
\end{align*}
where $n$ and $k$ are any positive integers.

\section{Preliminary Results}
In this section we first review basic notations and knowledge about Clifford algebra.
Suppose that $\{\mathbf{e}_{1}, \mathbf{e}_{2}, \cdots, \mathbf{e}_{n}\}$ is an orthonormal basis of Euclidean space $\mathbb{R}^{n},$
and satisfies the relations $\mathbf{e}^2_{i}=-1$ for $i=1, 2, \cdots, n,$ and $\mathbf{e}_{i}\mathbf{e}_{j}+\mathbf{e}_{j}\mathbf{e}_{i}=0$
for $1\leq i\neq j\leq n.$
Then, the real Clifford algebra $\mathbb{R}_{0,n}$ is the real algebra constructed over these elements, i.e.,
\begin{align*}
\mathbb{R}_{0,n}:=\left\{a=\sum_{S}a_{S}\mathbf{e}_{S}: \ a_{S}\in\mathbb{R},
 \ \mathbf{e}_{S}=\mathbf{e}_{j_1}\mathbf{e}_{j_2}\cdots \mathbf{e}_{j_k}\right\},
\end{align*}
where $S:=\{j_1, j_2, \cdots, j_k\}\subseteq\{1, 2, \cdots, n\}$ with $1\leq j_1<j_2<\cdots<j_k\leq n;$ or $S=\emptyset,$ with $\mathbf{e}_{\emptyset}:=1.$
Thus $\mathbb{R}_{0,n}$ stands as a $2^n$-dimensional real linear vector space.
Let $|S|$ be the cardinal number of the elements in a set $S.$
For each $k\in\{1, 2, \cdots, n\},$ set
\begin{align*}
\mathbb{R}^{(k)}_{0,n}:=\left\{a\in\mathbb{R}_{0,n}:\ a=\sum_{|S|=k}a_{S}\mathbf{e}_{S}\right\}.
\end{align*}
The elements of $\mathbb{R}^{(k)}_{0,n}$ are called $k$-vectors of $\mathbb{R}_{0,n}.$
For example, when $k=1$, $\mathbb{R}^{(1)}_{0,n}$ is identified with $\mathbb{R}^n.$
When $k=0$, $\mathbb{R}^{(0)}_{0,n}$ is $\mathbb{R}$ and for this reason a $0$-vector element is usually called a scalar.
Then for each element $a\in\mathbb{R}_{0,n},$ it has a projection $[a]_k$ on $\mathbb{R}^{(k)}_{0,n}.$
A Clifford number $a$ can also be represented by
\begin{align*}
a=\sum_{k=0}^n [a]_k.
\end{align*}
Thus, we have
\begin{align*}
\mathbb{R}_{0,n}=\bigoplus^n_{k=0}\mathbb{R}^{(k)}_{0,n}.
\end{align*}

In order to define the norm of $\mathbb{R}_{0,n},$
we need to introduce three involutions defined on $\mathbb{R}_{0,n}:$
the main involution, the reversion and the conjugation.
For each element $a\in\mathbb{R}_{0,n}$ the main involution $\thicksim:\ a\rightarrow\tilde{a}$ is defined by
\begin{align*}
\tilde{a}=\sum_{S}a_{S}\tilde{\mathbf{e}}_{S},
\end{align*}
where $\tilde{\mathbf{e}}_{S}:=(-1)^{|S|}\mathbf{e}_{S}.$
The reversion $\ast:\ a\rightarrow a^{\ast}$ is defined by
\begin{align*}
a^{\ast}=\sum_{S}a_{S}\mathbf{e}^{\ast}_{S},
\end{align*}
where $\mathbf{e}^{\ast}_{S}:=(-1)^{|S|(|S|-1)/2}\mathbf{e}_{S}$ with $\mathbf{e}_{S}=\mathbf{e}_{j_1}\mathbf{e}_{j_2}\cdots \mathbf{e}_{j_k}.$
The conjugation of $a$ is a combination by the main involution and the reversion of $a,$ i.e., the conjugation $-:\ a\rightarrow\overline{a}$ is defined by
\begin{align*}
\overline{a}=(\tilde{a})^{\ast}=\sum_{S}a_{S}(\tilde{\mathbf{e}}_{S})^{\ast}.
\end{align*}

For any element $a\in\mathbb{R}_{0,n},$
its norm $|a|$ is defined by
\begin{align*}
|a|=\left([a\overline{a}]_0\right)^{\frac{1}{2}}=\left(\sum_{S}|a_S|^2 \right)^{\frac{1}{2}}.
\end{align*}

Similarly, the complex Clifford algebra $\mathbb{C}_{0,n}$ can be defined by
\begin{align*}
\mathbb{C}_{0,n}:=\mathbb{C}\otimes\mathbb{R}_{0,n}=\mathbb{R}_{0,n}\oplus{\rm i}\mathbb{R}_{0,n},
\end{align*}
where ${\rm i}$ is the imaginary unit of $\mathbb C.$
An element $a\in\mathbb{C}_{0,n}$ can be written as
\begin{align*}
a=\sum_{S}a_{S}\mathbf{e}_{S},\ a_{S}\in\mathbb{C}.
\end{align*}
All the concepts introduced in $\mathbb{R}_{0,n}$ can be reformulated in the complex Clifford algebra except the conjugation.
For each element $a=\sum_{S}a_{S}\mathbf{e}_{S}\in\mathbb{C}_{0,n},$ the conjugate of $a$ is
$\overline{a}=\sum_{S}\overline{a}_{S}\overline{\mathbf{e}}_{S}$ with $\overline{{\rm i}}=-{\rm i}.$

An important subset of $\mathbb{R}_{0,n}$ is $\mathbb{R}^{(0)}_{0,n}\oplus\mathbb{R}^{(1)}_{0,n}.$
In fact, $\mathbb{R}^{(0)}_{0,n}\oplus\mathbb{R}^{(1)}_{0,n}$
is the real-linear span of $1, \mathbf{e}_{1}, \cdots, \mathbf{e}_{n}.$
So, for each $x\in\mathbb{R}^{(0)}_{0,n}\oplus\mathbb{R}^{(1)}_{0,n},$ it has the form $x:=x_{0}+\underline{x},$
where $x_{0}\in\mathbb{R}$ and $\underline{x}:=\sum_{j=1}^{n}x_{j}\mathbf{e}_{j}\in\mathbb{R}^n.$
For simplicity, we call $x_{0}$ the real part of $x$ and $\underline{x}$ the imaginary part of $x.$
Besides, $\mathbb{R}^{(0)}_{0,n}\oplus\mathbb{R}^{(1)}_{0,n}$ is naturally identified with $\mathbb{R}^{n+1}$
by associating a paravector $x=x_0+\underline{x}$ with the element $(x_0, x_1, \cdots, x_n)\in\mathbb{R}^{n+1}.$
With the identification, we will sometimes refer $\mathbb{R}^{(0)}_{0,n}\oplus\mathbb{R}^{(1)}_{0,n}$ to $\mathbb{R}^{n+1}$ for briefness.
For each $x\in\mathbb{R}^{n+1},$ the norm in $\mathbb{R}^{n+1}$ is
\begin{align*}
|x|:=\left([x\overline{x}]_0\right)^{1/2}=\left(x^{2}_{0}+x^{2}_{1}+\cdots+x^{2}_{n}\right)^{1/2},
\end{align*}
where $\overline{x}:=x_{0}-\underline{x}.$
If $x\in\mathbb{R}^{n+1}\backslash\{0\},$ then the inverse $x^{-1}$ exists and
$x^{-1}:=\overline{x}\cdot|x|^{-2}$.

Now we turn to the monogenic function concept which is a crucial object in Clifford analysis.
We need some notations first. Let $\mathbb{N}$ be the set of all positive integers and $\mathbb{N}_0:=\mathbb{N}\cup\{0\}.$
For $n\in\mathbb{N},$ $Cl_{0,n}$ means either $\mathbb{R}_{0,n}$ or $\mathbb{C}_{0,n}.$
The notation $C^{1}(\Omega,Cl_{0,n})$ (resp. $C^{1}(\underline{\Omega},Cl_{0,n})$) stands for
the continuously differentiable functions which are defined on an open set $\Omega\subset\mathbb{R}^{n+1}$
(resp. $\underline{\Omega}\subset\mathbb{R}^n$) and take values in the Clifford algebra $Cl_{0,n}.$
For each $f\in C^{1}(\Omega,Cl_{0,n}),$ it has the form
\begin{align*}
f=\sum_{S}f_S\mathbf{e}_S,
\end{align*}
where the functions $f_S$ are $\mathbb{R}$-valued or $\mathbb{C}$-valued.
Let $k\in\mathbb{N}_0,$  we denote by $\partial_k$ the derivative for the $k$-th variables,
i.e, $\partial_k:=\partial_{x_k}$ for $x_k$ being the $k$-th variable of $x\in\mathbb{R}^{n+1}.$
The Dirac operator is defined by
\begin{align*}
D_{\underline{x}}:=\partial_{x_1}\mathbf{e}_{1}+\partial_{x_2}\mathbf{e}_{2}+\cdots+\partial_{x_n}\mathbf{e}_{n},\ \ \underline{x}\in\underline{\Omega}.
\end{align*}
For each $x\in\Omega,$ the generalized Cauchy-Riemann operator is defined by
\begin{align*}
D:=\partial_{x_0}+D_{\underline{x}}.
\end{align*}

\begin{definition}[Monogenic Function]
Let $f(x)\in C^{1}(\Omega,Cl_{0,n})$ (resp. $f(\underline{x})\in C^{1}(\underline{\Omega},Cl_{0,n}$)).
Then $f(x)$ {\rm(resp. $f(\underline{x})$)} is called a {\rm(left)} monogenic function if and only if
\begin{align*}
Df(x)=0\ {\rm(resp.\ D_{\underline{x}}f(\underline{x})=0)}.
\end{align*}
\end{definition}

We note that the Cauchy kernel
\begin{align*}
E(x):=\frac{\overline{x}}{\omega_{n}|x|^{n+1}},\ x\in\mathbb{R}^{n+1}\backslash\{0\},
\end{align*}
plays a key role, where $\omega_{n}:=2\pi^{(n+1)/2}/\Gamma[(n+1)/2]$
is the surface area of the $n$ dimensional unit sphere in $\mathbb{R}^{n+1}.$
Let $S\subset\Omega$ be a region, ${S}^o$ be the interior of $S$
and $\partial S$ be compact differentiable and oriented.
If $f$ is left monogenic in $\Omega,$ then its Cauchy integral formula is
\begin{align*}
\int_{\partial S}E(y-x){\rm d}\sigma(y)f(y)=\begin{cases}
                                                 f(x), &x\in{{S}^o},  \\
                                                 0,  &x\in\Omega\backslash S,
                                               \end{cases}
\end{align*}
where the differential form ${\rm d}\sigma(y)$ is given by ${\rm d}\sigma(y):=\eta(y){\rm d}S(y),$
$\eta(y)$ is the outer unit normal to $\partial S$ at the point $y$ and ${\rm d}S(y)$ is the surface measure of $\partial S.$
For more details, one can see {\cite{bds}} and the references therein.

Next we will introduce axially monogenic functions which are special cases of monogenic functions defined on axially symmetric open sets.
$\mathbb{S}^{n-1}$ stands for the $n-1$ dimensional unit sphere in $\mathbb{R}^{n}$
\begin{align*}
\mathbb{S}^{n-1}:=\{\underline{x}\in\mathbb{R}^{n}: |\underline{x}|^{2}=1\}.
\end{align*}
For every ${\underline{\omega}}\in\mathbb{S}^{n-1},$ we know that ${\underline{\omega}}^2=-1$
and let
\begin{align}\label{2.11}
\mathbb{C}_{\underline{\omega}}:=\mathbb{R}+{\underline{\omega}}\mathbb{R}:=\{u+{\underline{\omega}}v: u,v\in\mathbb{R}\}.
\end{align}
If $x\in\mathbb{R}^{n+1},$ and
\begin{align*}
{\underline{\omega}_x}=:\begin{cases}
\frac{\underline{x}}{|\underline{x}|}, &\text{if }\underline{x}\neq0,  \\
\text{any element of }\mathbb{S}^{n-1},  &\text{if }\underline{x}=0.
\end{cases}
\end{align*}
Then, by (\ref{2.11}), we know that $x\in \mathbb{C}_{{\underline{\omega}}_x}$ and $x=x_0+{{\underline{\omega}}_x}|\underline{x}|.$
Let $x\in\mathbb{R}^{n+1},$ we also need the following notation
\begin{align*}
[x]:=\{y\in\mathbb{R}^{n+1}\ |\  y=\text{Re}(x)+{\rm I}|\underline{x}|, \forall\ {\rm I}\in\mathbb{S}^{n-1}\},
\end{align*}
where $\text{Re}(x)$ denotes the real part of $x.$ The set $[x]$ is a $n-1$ dimensional sphere in $\mathbb{R}^{n+1}$
with radius $|\underline{x}|$ and centered at $\text{Re}(x).$
If $x\in\mathbb{R},$ then $[x]=\{x\},$ the set of the sole element $x,$ and the radius of the sphere is zero.
This notation amounts to saying that $[x]$ is an equivalence class, i.e., an element $y$ is equivalent to $x$ (denoted as $y\sim x$) if and only if $y=s^{-1}xs$, for some $s\in\mathbb{R}^{n+1}$, $s\not=0$.

Next we introduce axially symmetric open sets.
\begin{definition}[Axially Symmetric Open Set] An open set $\Omega\subset\mathbb{R}^{n+1}$
is said to be axially symmetric if the
$(n-1)$-sphere $[u+\underline{\omega}v]$ is contained in $\Omega$ whenever $u+\underline{\omega}v\in \Omega$ for some $u, v \in \mathbb{R}.$
\end{definition}
Now we give the definition of axially monogenic functions.
\begin{definition}[Axially Monogenic Function]
Let $\Omega$ be an axially symmetric open set.
A function $f(x)\in C^{1}(\Omega,Cl_{0,n})$  is said to be axially monogenic if it is monogenic and has the axial form
\begin{align*}
f(x)=A(x_0,|\underline{x}|)+\frac{\underline{x}}{|\underline{x}|}B(x_0,|\underline{x}|),
\end{align*}
where $A(x_0,|\underline{x}|), B(x_0,|\underline{x}|)$ are real-valued functions.
\end{definition}

The Fueter theorem is a one-way bridge from the analysis of functions of one complex variable to quaternionic or Clifford analysis.
In \cite{f} Fueter first constructed a quaternion-valued monogenic function
from a holomorphic function in the complex plane.
Then, the Fueter theorem was extended to $\mathbb{R}^{n+1}$ by Sce \cite{s} for odd integers $n$ in $1957$ through computing the pointwise differential operator $\Delta^{(n-1)/2}.$
Qian in $1997$ extended the result to both $n$ being odd and $n$ being even cases by using the Fourier multiplier representation of the operator $\Delta^{(n-1)/2},$ in which his generalization to the odd $n$ cases coincides with what is obtained through Sce's method (\cite{q1}).

Before precisely formulate the Fueter Theorem in $\Omega\subset\mathbb{R}^{n+1},$ we first define some related concepts in the complex plane.

\begin{definition}[Intrinsic Set]
Let $O\subset\mathbb{C}$ be an open set. $O$ is intrinsic if it is symmetric with respect to the real axis.
\end{definition}
\begin{definition}[Holomorphic Intrinsic Function]
Let $O$ be an intrinsic set. A holomorphic function $f_0(z)$ is called a holomorphic intrinsic function
if it is defined on $O$ and satisfies $\overline{f_0(z)}=f_0(\overline{z}).$
\end{definition}
Every holomorphic function $f_0(z)$ has the form
\begin{align*}
f_0(z)=u(x_0,y_0)+{\rm i}v(x_0,y_0),
\end{align*}
where $u(x_0,y_0)$ and $v(x_0,y_0)$ are real valued functions.
The intrinsic condition $\overline{f_0(z)}=f_0(\overline{z})$ is equivalent with $u(x_0,y_0)=u(x_0,-y_0)$ and $v(x_0,y_0)=-v(x_0,-y_0).$
In particular, $v(x_0,0)=0,$ i.e., $f_0$ is real valued if it is restricted to the real line in its domain.
So, a characterization of a holomorphic intrinsic function is that the coefficients of its Laurent series expansions at real points are all real.

Let $O$ be an intrinsic set in $\mathbb{C}.$
Then the set $O$ induces an open set in $\mathbb{R}^{n+1}$
\begin{align*}
\vec{O}=\{x=x_0+\underline{x}\in\mathbb{R}^{n+1}\ |\ (x_0,|\underline{x}|)\in O\}.
\end{align*}
For $x\in\vec{O},$ we can define the induced function
\begin{align*}
\vec{f_0}(x):=u(x_0,|\underline{x}|)+
\frac{\underline{x}}{|\underline{x}|}v(x_0,|\underline{x}|).
\end{align*}
Sce's generalization (\cite{s}) of the Fueter theorem amounts to the fact that
for any odd dimensional index $n$
\begin{align*}
\Delta^{\frac{n-1}{2}}\vec{f_0}(x),\quad x\in\vec{O},
\end{align*}
is axially monogenic, where the Laplacian in the $n+1$ real variables $x_0,x_1,...,x_n$ has the form
\begin{align*}
\Delta:=\partial^2_{x_0}+\partial^2_{x_1}+\cdots+\partial^2_{x_n}.
\end{align*}

When $n\in\mathbb{N}$ is an even integer, $(n-1)/2$ is not a positive integer and $\Delta^{(n-1)/2}$ can not be computed by the pointwise method.
So we need to introduce the knowledge of Fourier multiplier.
Denote by $\mathcal{S}(\mathbb{R}^{n+1})$ the Schwarz space and $\mathcal{S}^\ast(\mathbb{R}^{n+1})$ the dual space of $\mathcal{S}(\mathbb{R}^{n+1}).$
Then for every $f\in\mathcal{S}^\ast(\mathbb{R}^{n+1}),$ its Fourier transform and inverse Fourier transform are defined by
\begin{align*}
\langle\mathcal{F}(f), \phi\rangle:=\langle f, \hat{\phi}\rangle,\ \forall\ \phi\in\mathcal{S}(\mathbb{R}^{n+1})
\end{align*}
and
\begin{align*}
\langle\mathcal{F}^{-1}(f), \phi\rangle:=\langle f, \check{\phi}\rangle,\ \forall\ \phi\in\mathcal{S}(\mathbb{R}^{n+1}),
\end{align*}
respectively, where $\hat{\phi}$ and $\check{\phi}$ are the usual Fourier transform
and inverse Fourier transform of $\phi$ defined in the Schwarz class $\mathcal{S}(\mathbb{R}^{n+1}).$

We will use the Fourier multiplier operator induced by $g:$
\begin{align*}
M_g(f):=\mathcal{F}^{-1}[g\mathcal{F}(f)].
\end{align*}
The equation is in the tempered distribution sense.

In the paper, for $n\in\mathbb{N}$ and $x\in\mathbb{R}^{n+1},$ we use the fractional Laplace operator
$(-\Delta)^{(n-1)/2}$ to construct the Fueter mapping
which is defined via the Fourier multiplier operator $M_g$ with $g(x):=(2\pi|x|)^{n-1}.$
For more details about $(-\Delta)^{(n-1)/2},$ the reader may refer to page $117$ in \cite{stein}.
Below we adopt the convention that the notation $(-\Delta)^{(n-1)/2}$ is defined by the Fourier multiplier operator
$Mg$ that coincides with the pointwise differential operator $\Delta^{(n-1)/2}$ for $n$ is any
odd integer.

Let $n\in\mathbb{N}$ and $\beta$ be the mapping defined by
\begin{align*}
\beta(f_0)=(-\Delta)^{\frac{n-1}{2}}{\vec f_0},
\end{align*}
where $f_0$ is a holomorphic intrinsic function and $\vec{f_0}$ is the induced function from $f_0.$
In \cite{KQS2002,q1}, we know that $\beta(f_0)$ is a monogenic function.
In the next section, we further prove that $\beta(f_0)$ is an axially monogenic function.
That is, the Fueter mapping $\beta$ maps a holomorphic intrinsic function
to an axially monogenic function (the Axial Form Theorem).
Due to a recent result in \cite{dkqs}, we know that the Fueter mapping is a surjection from the set of
holomorphic intrinsic functions to the class of axially monogenic functions.

\section{The Fueter Mapping Is A Surjective Mapping}
As mentioned in Chapter $2,$ if a holomorphic intrinsic function $f_0(z)$ is expanded at $z=0,$ its Laurent series expansion has the form $\sum_{l\in\mathbb{Z}}a_lz^l,$ where $\mathbb{Z}$ means the set of all integers and $a_l$ are all real number.
Then we define
\begin{align*}
\beta(f_0(z))(x)=\sum_{l\in\mathbb{Z}}a_l\beta(z^l)(x)=\sum_{l\in\mathbb{Z}}a_l(-\Delta)^{\frac{n-1}{2}}(x^l),\ x\in\mathbb{R}^{n+1}.
\end{align*}
We will prove that $\{(-\Delta)^{(n-1)/2}(x^l):\ n\in\mathbb{N}\ \text{and}\ l\in\mathbb{Z}\}$ is a set of axially monogenic functions.
Indeed, as a by-pass result of Sce's proof (\cite{s}) of the generalized Fueter theorem (also called the Fueter Theorem) for odd dimension indices $n$
we have that $(-\Delta)^{(n-1)/2}(x^l)$ is axially monogenic.
Later, in \cite{q1}, Qian proved that $(-\Delta)^{(n-1)/2}(x^l)$ is axially monogenic for $n\in\mathbb{N}$ and $l\in\mathbb{Z}\backslash\mathbb{N}_0.$
In \cite{KQS2002}, Kou, Qian and Sommen obtained that $(-\Delta)^{(n-1)/2}(x^l)$ is monogenic for $n\in\mathbb{N}$ being even and $l\in\mathbb{N}_0.$
The next theorem further shows that for the same ranges of $n$ and $l$ the functions $(-\Delta)^{(n-1)/2}(x^l)$ are all of the axial form. Here, as in all the other places, $(-\Delta)^{(n-1)/2}(x^l)$ are defined through the corresponding Fourier multipliers in the distribution sense, though in the odd $n$ cases this definition coincides with the one by using the pointwise differential operator.
Before giving the theorem, we first state a technical lemma.
\begin{lemma}[see {\cite[on pages 73 and 117]{stein}}]\label{l3.1}
Let $k\in\mathbb{N}_0,$ $n\in\mathbb{N}$ and $0<\alpha<n+1.$ Suppose that $P_k(x)$ be a homogeneous harmonic polynomial of degree $k.$
Then
\begin{align}\label{4.1}
\int_{\mathbb{R}^{n+1}}\frac{P_k(x)}{|x|^{k+n+1-\alpha}}\hat{\varphi}(x){\rm d}x=\gamma_{k,\alpha}
\int_{\mathbb{R}^{n+1}}\frac{P_k(x)}{|x|^{k+\alpha}}\varphi(x){\rm d}x
\end{align}
for every $\varphi$ which is sufficiently rapidly decreasing at infinity and
\begin{align*}
\gamma_{k,\alpha}:={\rm i}^k\pi^{(n+1)/2-\alpha}\Gamma(k/2+\alpha/2)/\Gamma(k/2+(n+1)/2-\alpha/2),
\end{align*}
where ${\rm i}$ is the imaginary unit of the complex plane $\mathbb{C}$ and $\Gamma$ is the Gamma function.
\end{lemma}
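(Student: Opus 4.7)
The plan is to establish the equivalent distributional identity
\[
\mathcal{F}\!\left(\frac{P_k(\cdot)}{|\cdot|^{k+\alpha}}\right)(\xi) = \gamma_{k,\alpha}\,\frac{P_k(\xi)}{|\xi|^{k+n+1-\alpha}},
\]
from which (\ref{4.1}) will follow by the defining duality of the Fourier transform on $\mathcal{S}^{\ast}(\mathbb{R}^{n+1})$. The hypothesis $0<\alpha<n+1$ is self-dual in the sense that it simultaneously gives $0<n+1-\alpha<n+1$, ensuring that both sides define tempered distributions: writing $P_k(x)=|x|^{k}Y_k(\omega)$ with $Y_k$ a spherical harmonic bounded on $\mathbb{S}^{n}$, the local singularity at the origin is of order $|x|^{-\alpha}$ (locally integrable) and the decay at infinity is of order $|x|^{-\alpha}$ (tempered); the dual inequalities handle the right-hand side.

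The heart of the argument is a Gaussian-subordination identity combined with Hecke's formula for Gaussian-weighted solid harmonics. Using
\[
\frac{1}{|x|^{k+\alpha}}=\frac{1}{\Gamma\bigl((k+\alpha)/2\bigr)}\int_{0}^{\infty}t^{(k+\alpha)/2-1}\,e^{-t|x|^{2}}\,dt,
\]
I would rewrite $P_k(x)/|x|^{k+\alpha}$ as a $t$-average of the functions $P_k(x)e^{-t|x|^{2}}$, then take the Fourier transform under the integral, and substitute Hecke's identity
\[
\mathcal{F}\bigl(P_k(x)e^{-t|x|^{2}}\bigr)(\xi)=c_{n,k}\,t^{-k-(n+1)/2}\,P_k(\xi)\,e^{-\pi^{2}|\xi|^{2}/t},
\]
with $c_{n,k}$ proportional to $(-{\rm i})^{k}$. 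The change of variable $s=\pi^{2}|\xi|^{2}/t$ converts the remaining $t$-integral into another Gamma integral, producing $|\xi|^{-(k+n+1-\alpha)}$ multiplied by $\Gamma\bigl(k/2+(n+1-\alpha)/2\bigr)$. Collecting the two Gamma factors, the accumulated powers of $\pi$, and the single factor of ${\rm i}^{k}$, one recovers exactly the constant $\gamma_{k,\alpha}$ stated in the lemma.

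The main obstacle I anticipate is the interchange of Fourier transform and $t$-integral when $\alpha$ lies near the endpoints $0$ or $n+1$, where absolute convergence deteriorates. The standard remedy is to carry out the computation first in a smaller strip $\epsilon<\alpha<n+1-\epsilon$, where Fubini applies to the pairing against any Schwartz test function $\varphi$, and then extend to the full range $0<\alpha<n+1$ by analytic continuation in $\alpha$, exploiting the fact that both sides of the target identity depend holomorphically on $\alpha$ throughout this strip. Hecke's identity itself can be established either by expanding $e^{-t|x|^{2}}$ in spherical harmonics via a generating-function argument and reducing to a one-dimensional Gaussian Fourier transform, or by using the rotational-covariance of the Fourier transform together with the observation that $P_k(D)e^{-\pi|x|^{2}}$ is proportional to $P_k(x)e^{-\pi|x|^{2}}$. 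With these two ingredients in place, the identification of $\gamma_{k,\alpha}$ reduces to a routine Gamma-function calculation.
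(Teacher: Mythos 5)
The paper offers no proof of this lemma at all: it is imported verbatim (with the dimension shifted from $n$ to $n+1$) from Stein's book, which is why the statement carries the citation ``see [Stein, pp.\ 73 and 117]''. Your proposal is essentially a reconstruction of Stein's own argument --- Gaussian subordination of $|x|^{-(k+\alpha)}$ via the Gamma integral, Hecke's identity $\mathcal{F}\bigl(P_k(x)e^{-\pi|x|^2}\bigr)=(\pm{\rm i})^kP_k(\xi)e^{-\pi|\xi|^2}$ with its scaling in $t$, a second Gamma integral after the substitution $s=\pi^2|\xi|^2/t$, and analytic continuation in $\alpha$ to reach the full strip $0<\alpha<n+1$. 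The integrability discussion and the two suggested proofs of Hecke's identity are also standard and correct. So in method you are exactly aligned with the source the paper relies on.

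There is, however, one concrete error in your setup. The identity you declare ``equivalent'' to \eqref{4.1}, namely $\mathcal{F}\bigl(P_k(\cdot)/|\cdot|^{k+\alpha}\bigr)=\gamma_{k,\alpha}\,P_k(\xi)/|\xi|^{k+n+1-\alpha}$, is false with that constant: \eqref{4.1} in distributional form reads $\mathcal{F}\bigl(P_k(\cdot)/|\cdot|^{k+n+1-\alpha}\bigr)=\gamma_{k,\alpha}\,P_k(\xi)/|\xi|^{k+\alpha}$ (this is the Remark following the lemma), and applying it at the parameter $n+1-\alpha$ shows that the correct constant in your version is $\gamma_{k,n+1-\alpha}={\rm i}^{2k}/\gamma_{k,\alpha}$, which differs from $\gamma_{k,\alpha}$ except in degenerate cases. (The relation $\gamma_{k,\alpha}\gamma_{k,n+1-\alpha}={\rm i}^{2k}$ is forced by $\mathcal{F}^2u(x)=u(-x)$ and the parity of $P_k$; you can check the discrepancy already at $k=0$ against the classical Riesz-potential formula.) Your subordination computation, carried out carefully, produces $\gamma_{k,n+1-\alpha}$, not $\gamma_{k,\alpha}$, so the closing claim that one ``recovers exactly the constant $\gamma_{k,\alpha}$'' does not hold for the identity as you wrote it. The repair is trivial: either run the subordination on the kernel $P_k(\cdot)/|\cdot|^{k+n+1-\alpha}$ directly, or keep your computation and finish by substituting $\alpha\mapsto n+1-\alpha$, which is legitimate since the hypothesis $0<\alpha<n+1$ is invariant under that substitution. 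You should also fix the small internal inconsistency between the factor $(-{\rm i})^k$ you assign to Hecke's identity and the factor ${\rm i}^k$ you claim to collect at the end; which one appears depends on the Fourier convention, and it must match the convention under which $\gamma_{k,\alpha}$ carries ${\rm i}^k$.
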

\begin{remark}
The equation $(\ref{4.1})$ implies, in the distribution sense,
\begin{align*}
\mathcal{F}\left[\frac{P_k(x)}{|x|^{k+n+1-\alpha}}\right](\xi)=\gamma_{k,\alpha}\frac{P_k(\xi)}{|\xi|^{k+\alpha}}
\end{align*}
or
\begin{align*}
\frac{P_k(x)}{|x|^{k+n+1-\alpha}}=\gamma_{k,\alpha}\mathcal{F}^{-1}\left[\frac{P_k(\xi)}{|\xi|^{k+\alpha}}\right](x).
\end{align*}
\end{remark}

\begin{theorem}\label{t3.2}
Let $l\in\mathbb{N}_0,$ $n\in\mathbb{N}$ be even and $x\in\mathbb{R}^{n+1}.$ Then $(-\Delta)^{(n-1)/2}(x^l)$ is an axially monogenic function.
\end{theorem}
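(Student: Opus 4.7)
The driving idea is that the Fourier symbol $(2\pi|\xi|)^{n-1}$ of $(-\Delta)^{(n-1)/2}$ is a radial function of $\xi=(\xi_0,\underline{\xi})$, and therefore respects the rotational symmetry in the $\underline{x}$-variables that characterizes axial form. Since $(-\Delta)^{(n-1)/2}(x^l)$ is already known to be monogenic from \cite{KQS2002}, the task reduces to verifying the axial shape.

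First I would record that $x^l$ itself is axial. Expanding $(x_0+\underline{x})^l$ by the binomial theorem and collapsing even powers of $\underline{x}$ through $\underline{x}^2=-|\underline{x}|^2$ yields
\begin{align*}
x^l=A_l(x_0,|\underline{x}|)+\underline{x}\,C_l(x_0,|\underline{x}|),
\end{align*}
where $A_l,C_l$ are real polynomials in $x_0$ and $|\underline{x}|^2$; this is axial with $B_l(x_0,|\underline{x}|):=|\underline{x}|C_l(x_0,|\underline{x}|)$. Next I would check that the operator $M_g=\mathcal{F}^{-1}\circ g\cdot\mathcal{F}$ with $g(\xi)=(2\pi|\xi|)^{n-1}$ preserves this shape. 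For a scalar axial piece $A(x_0,|\underline{x}|)$, radial symmetry in $\underline{x}$ forces $\mathcal{F}[A](\xi)=\widetilde{A}(\xi_0,|\underline{\xi}|)$. For a vector axial piece $\underline{x}\,C(x_0,|\underline{x}|)$, the identity $\mathcal{F}[x_j f]=(i/2\pi)\partial_{\xi_j}\mathcal{F}[f]$ applied componentwise, combined with the fact that $\mathcal{F}[C]$ is radial in $\underline{\xi}$, yields $\mathcal{F}[\underline{x}\,C](\xi)=(\underline{\xi}/|\underline{\xi}|)\widetilde{C}(\xi_0,|\underline{\xi}|)$. Multiplication by the radial factor $(2\pi\sqrt{\xi_0^2+|\underline{\xi}|^2})^{n-1}$ only alters the radial profiles and so preserves the form on the Fourier side; the inverse Fourier transform then transports it back to the physical side by the symmetric argument, which together with the known monogenicity finishes the proof.

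The main obstacle is the distribution-theoretic bookkeeping: since $x^l$ is polynomial, $\mathcal{F}[x^l]$ is supported at the origin, while $(2\pi|\xi|)^{n-1}$ fails to be smooth at $\xi=0$ when $n-1$ is odd, so the product requires careful interpretation. The cleanest route through, which exploits precisely the content of Lemma \ref{l3.1}, is to represent each monomial piece $x_0^{l-k}\underline{x}^k$ through explicit Riesz-type convolutions with kernels of the form $P_k(x)/|x|^{k+n+1-\alpha}$ and then evaluate $M_g$ by a second application of the Lemma, after which the axial form is manifest at every step. A less constructive alternative is to regularize $(2\pi|\xi|)^{n-1}$ to $(2\pi|\xi|)^{n-1-\varepsilon}$ (for which the Fourier argument above applies without subtlety) and to pass to the limit $\varepsilon\to 0^+$, relying on \cite{KQS2002} to guarantee that the limiting monogenic distribution exists and on the fact that axial form is preserved under the resulting weak convergence.
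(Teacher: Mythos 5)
Your symmetry heuristic --- that the radial symbol $(2\pi|\xi|)^{n-1}$ respects rotations in the $\underline{x}$-variables and therefore preserves the axial shape --- is the right guiding principle, and rotation invariance of a convolution kernel is indeed what drives the paper's argument in the end. The genuine gap is that neither of your two proposed devices for making $M_g(x^l)$ well-defined actually works. Your first route asks to represent the monomial pieces $x_0^{l-k}\underline{x}^k$ as Riesz-type convolutions with kernels $P_k(x)/|x|^{k+n+1-\alpha}$; but Lemma \ref{l3.1} only covers $0<\alpha<n+1$, so those kernels are homogeneous of \emph{negative} degree (singular at the origin, decaying at infinity), and a polynomial is not of that form --- there is no such representation to feed into a ``second application of the Lemma.'' Your second route, regularizing the symbol to $(2\pi|\xi|)^{n-1-\varepsilon}$, does not remove the obstruction you yourself identified: for $n$ even the regularized symbol is still only finitely differentiable at $\xi=0$, while $\mathcal{F}(x^l)$ is a combination of $\partial^{\alpha}\delta$ with $|\alpha|=l$ arbitrarily large, so the product $g_\varepsilon\cdot\mathcal{F}(x^l)$ is just as ill-defined as before; the claim that the Fourier argument ``applies without subtlety'' after regularization is not correct.

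The missing idea, which is the paper's, is to factor the half-integer power through an integer power: write $(2\pi|\xi|)^{n-1}=(2\pi|\xi|)^{-1}(2\pi|\xi|)^{n}$, so that
\begin{align*}
(-\Delta)^{\frac{n-1}{2}}\left(x^l\right)=\frac{1}{2\pi\gamma_{0,1}}\left(\frac{1}{|\cdot|^{n}}\ast(-\Delta)^{\frac{n}{2}}\left((\cdot)^l\right)\right)(x).
\end{align*}
Since $n$ is even, $(-\Delta)^{n/2}$ is an honest pointwise differential operator; applied to the polynomial $x^l$ it produces a polynomial that is already of axial form $A(y_0,|\underline{y}|)+\frac{\underline{y}}{|\underline{y}|}B(y_0,|\underline{y}|)$ by the classical computations on integer powers of the Laplacian (and is identically $0$ when $l\le n-2$, a case the paper also settles directly by a Riesz-transform factorization of the symbol). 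Only the residual order-one factor $(2\pi|\xi|)^{-1}$ is handled by Lemma \ref{l3.1} (with $k=0$, $\alpha=1$), turning it into convolution with the locally integrable, rotation-invariant kernel $c/|x-y|^{n}$; an explicit spherical integration with $\underline{\omega}=\rho\underline{v}+(1-\rho^2)^{1/2}\underline{\omega}'$ then kills the $\underline{\omega}'$-component by antisymmetry over $\mathbb{S}^{n-2}$ and exhibits the result as $U(x_0,r)+\underline{v}V(x_0,r)$. With this factorization in place, your rotation-invariance argument becomes the legitimate final step rather than a formal manipulation of ill-defined products.
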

\begin{proof}
First we prove the case of $l\in\{0, 1, 2, \cdots, n-2\}$ and $n\in\mathbb{N}$ being even.
\begin{align*}
\mathcal{F}\left((-\Delta)^{\frac{n-1}{2}}\left((\cdot)^l\right)\right)(\xi)
&=(2\pi|\xi|)^{n-1}\mathcal{F}\left((\cdot)^l\right)(\xi)\\
&=-{\rm i}\frac{\overline{\xi}}{|\xi|}(2\pi{\rm i}\xi)(2\pi|\xi|)^{n-2}\mathcal{F}\left((\cdot)^l\right)(\xi)\\
&=\sum_{i=0}^n\left[-{\rm i}\frac{\xi_i}{|\xi|}\mathcal{F}\left(D(-\Delta)^{\frac{n-2}{2}}\left((\cdot)^l\right)\right)(\xi)\right]\overline{\textbf{e}_i}\\
&=\sum_{i=0}^n\left[\mathcal{F}\left(\mathcal{R}_j\left(D(-\Delta)^{\frac{n-2}{2}}\left((\cdot)^l\right)\right)\right)(\xi)\right]\overline{\textbf{e}_i},
\end{align*}
where the last equation is the definition of the Riesz transforms $\mathcal{R}_j$ given by a Fourier multiplier.

So, we have
\begin{align*}
(-\Delta)^{\frac{n-1}{2}}\left((\cdot)^l\right)
=\sum_{i=0}^n\mathcal{R}_j\left(D(-\Delta)^{\frac{n-2}{2}}\left((\cdot)^l\right)\right)\overline{\textbf{e}_i},
\end{align*}
which holds in the distribution sense.

Now we compute $D(-\Delta)^{(n-2)/2}\left((\cdot)^l\right)$ for $l\in\{0, 1, 2, \cdots, n-2\}$ and $n\in\mathbb{N}$ being even.
We will use the result
\begin{align*}
\mathcal{F}\left(x^\alpha \right)(\xi)
=\mathcal{F}\left(x_0^{\alpha_0}x_1^{\alpha_1}\cdots x_n^{\alpha_n}\right)(\xi)
={\rm i}^{-|\alpha|}D^\alpha\delta(\xi),
\end{align*}
where $\alpha=(\alpha_0,\alpha_1,\cdots,\alpha_n),$
$D^\alpha=(\partial_0)^{\alpha_0}(\partial_1)^{\alpha_1}\cdots(\partial_n)^{\alpha_n}$
and $\delta$ is the usual Dirac $\delta$ function.

Let $x\in\mathbb{R}^{n+1}$ and $|\alpha|=l,$we obtain, with the self-explanatory notation $S_\alpha,$
\begin{align*}
\mathcal{F}(x^l)&=\mathcal{F}\left(\sum_{|S_\alpha|=0}^{n}x_0^{\alpha_0}x_1^{\alpha_1}\cdots x_n^{\alpha_n}\textbf{e}_{S_\alpha}\right)\\
&=\sum_{|S_\alpha|=0}^{n}\mathcal{F}\left(x_0^{\alpha_0}x_1^{\alpha_1}\cdots x_n^{\alpha_n}\right)\textbf{e}_{S_\alpha}\\
&=\sum_{|S_\alpha|=0}^{n}{\rm i}^{-l}D^\alpha\delta\textbf{e}_{S_\alpha}.
\end{align*}

For any $\varphi\in\mathcal{S}(\mathbb{R}^{n+1}),$ we have
\begin{align*}
\left\langle D(-\Delta)^{\frac{n-2}{2}}(x^l), \varphi(x)\right\rangle
&=\left\langle(2\pi{\rm i}\xi)(2\pi|\xi|)^{n-2}\mathcal{F}\left((\cdot)^l\right)(\xi), \check{\varphi}(\xi)\right\rangle\\
&=\left\langle\mathcal{F}\left((\cdot)^l\right)(\xi), (2\pi{\rm i}\xi)(2\pi|\xi|)^{n-2}\check{\varphi}(\xi)\right\rangle\\
&=\left\langle\sum_{|S_\alpha|=0}^{n}{\rm i}^{-l}D^\alpha\delta(\xi)\textbf{e}_{S_\alpha}, (2\pi{\rm i}\xi)(2\pi|\xi|)^{n-2}\check{\varphi}(\xi)\right\rangle\\
&=\sum_{|S_\alpha|=0}^{n}\left\langle{\rm i}^{-l}D^\alpha\delta(\xi), (2\pi{\rm i}\xi)(2\pi|\xi|)^{n-2}\check{\varphi}(\xi)\right\rangle\textbf{e}_{S_\alpha}\\
&=\sum_{|S_\alpha|=0}^{n}(-1)^l\left\langle{\rm i}^{-l}\delta(\xi), D^\alpha\left[(2\pi{\rm i}\xi)(2\pi|\xi|)^{n-2}\check{\varphi}(\xi)\right]\right\rangle\textbf{e}_{S_\alpha}.
\end{align*}
For any fixed $\alpha$ with $|\alpha|\in\{0, 1, 2, \cdots, n-2\},$  we know that each term of
the expansion of $D^\alpha\left[(2\pi{\rm i}\xi)(2\pi|\xi|)^{n-2}\check{\varphi}(\xi)\right]$ under the Leibniz law contains positive powers of some elements of $\xi_0, \xi_1\cdots, \xi_n$ or $|\xi|,$ and no negative powers of them.
So we have
\begin{align*}
\left\langle{\rm i}^{-l}\delta(\xi), D^\alpha\left[(2\pi{\rm i}\xi)(2\pi|\xi|)^{n-2}\check{\varphi}(\xi)\right]\right\rangle=0
\end{align*}
and
\begin{align*}
\left\langle D(-\Delta)^{\frac{n-2}{2}}\left(x^l\right), \varphi(x)\right\rangle
&=\sum_{|S_\alpha|=0}^{n}(-1)^l\left\langle{\rm i}^{-l}\delta(\xi), D^\alpha\left[(2\pi{\rm i}\xi)(2\pi|\xi|)^{n-2}\check{\varphi}(\xi)\right]\right\rangle\textbf{e}_{S_\alpha}\\
&=0,
\end{align*}
which gives
\begin{align*}
D(-\Delta)^{\frac{n-2}{2}}\left(x^l\right)=0.
\end{align*}

Thus, we have
\begin{align*}
(-\Delta)^{\frac{n-1}{2}}\left(x^l\right)
&=\sum_{i=0}^n\mathcal{R}_j\left(D(-\Delta)^{\frac{n-2}{2}}\left((\cdot)^l\right)\right)\overline{\textbf{e}_i}\\
&=0,
\end{align*}
which means $(-\Delta)^{(n-1)/2}\left(x^l\right)$ is an axially monogenic function for $n\in\mathbb{N}$ being even and $l\in\{0, 1, 2, \cdots, n-2\}.$

Now let $l\in\mathbb{N}_0\setminus\{0, 1, 2, \cdots, n-2\}$ and $n\in\mathbb{N}$ be even.
The monogenicity of $(-\Delta)^{(n-1)/2}(x^l)$ is proved
in \cite{KQS2002}. Next we show that $(-\Delta)^{(n-1)/2}(x^l)$ is axial.
\begin{align*}
\mathcal{F}\left((-\Delta)^{\frac{n-1}{2}}\left((\cdot)^l\right)\right)(\xi)
&=(2\pi|\xi|)^{n-1}\mathcal{F}\left((\cdot)^l\right)(\xi)\\
&=\frac{1}{2\pi|\xi|}(2\pi|\xi|)^{n}\mathcal{F}\left((\cdot)^l\right)(\xi)\\
&=\frac{1}{2\pi|\xi|}\mathcal{F}\left((-\Delta)^{\frac{n}{2}}\left((\cdot)^l\right)\right)(\xi).
\end{align*}
By Lemma \ref{l3.1} for $k=0, \alpha=1,$ we have
\begin{align*}
\mathcal{F}\left((-\Delta)^{\frac{n-1}{2}}\left((\cdot)^l\right)\right)(\xi)
&=\frac{1}{2\pi\gamma_{0,1}}\mathcal{F}\left(\frac{1}{|\cdot|^n}\right)(\xi)\mathcal{F}\left((-\Delta)^{\frac{n}{2}}\left((\cdot)^l\right)\right)(\xi)\\
&=\frac{1}{2\pi\gamma_{0,1}}\mathcal{F}\left((-\Delta)^{\frac{n}{2}}\left((\cdot)^l\right)\ast\frac{1}{|\cdot|^n}\right)(\xi).
\end{align*}
So, we obtain
\begin{align*}
(-\Delta)^{\frac{n-1}{2}}\left(x^l\right)&=\frac{1}{2\pi\gamma_{0,1}}\left((-\Delta)^{\frac{n}{2}}\left((\cdot)^l\right)\ast\frac{1}{|\cdot|^n}\right)(x)\\
&=\frac{1}{2\pi\gamma_{0,1}}\int_{\mathbb{R}^{n+1}}\frac{1}{|x-y|^n}(-\Delta)^{\frac{n}{2}}(y^l){\rm d}y,
\end{align*}
where the equation holds in the tempered distribution sense.

For $y\in\mathbb{R}^{n+1},$ with $\underline{\omega}:=\underline{y}/|\underline{y}|\in\mathbb{S}^{n-1},$ we have
\begin{align*}
y=y_0+\underline{y}=y_0+\underline{\omega}t,\  t=|\underline{y}|.
\end{align*}
By the computations on integer powers of the Laplacian as given in \cite{q1} or \cite{s}, or \cite{S2000},  we know that $(-\Delta)^{n/2}(y^l)$ has the axial form
\begin{align*}
(-\Delta)^{\frac{n}{2}}\left(y^l\right)&=A(y_0,t)+\underline{\omega}B(y_0,t)\\
&=A(y_0,|\underline{y}|)+\frac{\underline{y}}{|\underline{y}|}B(y_0,|\underline{y}|),
\end{align*}
where $A(y_0,t)$ and $B(y_0,t)$ are real-valued functions.

For any $x\in\mathbb{R}^{n+1},$ with $\underline{v}:=\underline{x}/|\underline{x}|\in\mathbb{S}^{n-1},$ we have
\begin{align*}
x=x_0+\underline{x}=x_0+\underline{v}r,\  r=|\underline{x}|.
\end{align*}
Then,
\begin{align*}
(-\Delta)^{\frac{n-1}{2}}\left(x^l\right)
=&\frac{1}{2\pi\gamma_{0,1}}\int_{\mathbb{R}^{n+1}}\frac{1}{|x-y|^n}\left(A(y_0,|\underline{y}|)+
\frac{\underline{y}}{|\underline{y}|}B(y_0,|\underline{y}|)\right){\rm d}y\\
=&\frac{1}{2\pi\gamma_{0,1}}\int_{\mathbb{R}}{\rm d}y_0\int_{\mathbb{R}^n}\frac{1}{|x-y|^n}\left(A(y_0,|\underline{y}|)+
\frac{\underline{y}}{|\underline{y}|}B(y_0,|\underline{y}|)\right){\rm d}\underline{y}\\
=&\frac{1}{2\pi\gamma_{0,1}}\omega_{n-1}\int_{\mathbb{R}}{\rm d}y_0\int_0^{+\infty}t^{n-1}{\rm d}t\int_{\mathbb{S}^{n-1}}
\frac{A(y_0,t)+\underline{\omega}B(y_0,t)}{\left((x_0-y_0)^2+|r\underline{v}-t\underline{\omega}|^2\right)^{\frac{n}{2}}}{\rm d}S(\underline{\omega}).\\
\end{align*}

\begin{multicols}{2}
Let $-1\leqslant\rho\leqslant1$ and $\underline{\omega}'\in\mathbb{S}^{n-2}.$
From the enclosed diagram we can obtain that\\
{\rm (i)} $\underline{\omega}=\rho\underline{v}+(1-\rho^2)^{1/2}\underline{\omega}';$\\
{\rm (ii)} $(\rho\underline{v}-\underline{\omega})\bot\underline{v}.$\\

\centering
\includegraphics[width=4.4cm]{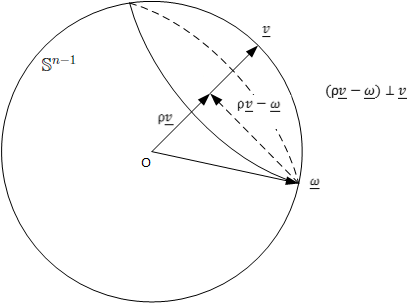}
\end{multicols}

From {\rm (i)} and {\rm (ii)} we have

\begin{align*}
\frac{r}{t}\underline{v}-\underline{\omega}=\left(\frac{r}{t}-\rho\right)\underline{v}+(\rho\underline{v}-\underline{\omega}).
\end{align*}
and
\begin{align*}
|r\underline{v}-t\underline{\omega}|^2&=t^2\left|\frac{r}{t}\underline{v}-\underline{\omega}\right|^2\\
&=t^2\left[\left(\frac{r}{t}-\rho\right)^2+(1-\rho^2)\right]\\
&=t^2+r^2-2rt\rho.
\end{align*}

Then
\begin{align*}
(-\Delta)^{\frac{n-1}{2}}\left(x^l\right)
=&\frac{1}{2\pi\gamma_{0,1}}\omega_{n-1}\int_{\mathbb{R}}{\rm d}y_0\int_0^{+\infty}t^{n-1}{\rm d}t\int_{-1}^1(1-\rho^2)^{\frac{n-3}{2}}{\rm d}\rho\\
&\quad\int_{\mathbb{S}^{n-2}}\frac{A(y_0,t)+[\rho\underline{v}+(1-\rho^2)^{1/2}\underline{\omega}']B(y_0,t)}
{\left((x_0-y_0)^2+t^2+r^2-2rt\rho\right)^{\frac{n}{2}}}{\rm d}S(\underline{\omega'})\\
=&\frac{1}{2\pi\gamma_{0,1}}\omega_{n-1}\omega_{n-2}\int_{\mathbb{R}}{\rm d}y_0\int_0^{+\infty}t^{n-1}{\rm d}t\\
&\quad\int_{-1}^1\frac{\left[A(y_0,t)+\underline{v}\rho B(y_0,t)\right](1-\rho^2)^{\frac{n-3}{2}}}
{\left((x_0-y_0)^2+t^2+r^2-2rt\rho\right)^{\frac{n}{2}}}{\rm d}\rho.
\end{align*}
Let
\begin{align*}
U(x_0,r):=&\frac{\omega_{n-1}\omega_{n-2}}{2\pi\gamma_{0,1}}\int_{\mathbb{R}}{\rm d}y_0\int_0^{+\infty}t^{n-1}{\rm d}t
\int_{-1}^1\frac{A(y_0,t)(1-\rho^2)^{\frac{n-3}{2}}}{\left((x_0-y_0)^2+t^2+r^2-2rt\rho\right)^{\frac{n}{2}}}{\rm d}\rho,\\
V(x_0,r):=&\frac{\omega_{n-1}\omega_{n-2}}{2\pi\gamma_{0,1}}\int_{\mathbb{R}}{\rm d}y_0\int_0^{+\infty}t^{n-1}{\rm d}t
\int_{-1}^1\frac{B(y_0,t)\rho(1-\rho^2)^{\frac{n-3}{2}}}{\left((x_0-y_0)^2+t^2+r^2-2rt\rho\right)^{\frac{n}{2}}}{\rm d}\rho.
\end{align*}
We obtain
\begin{align*}
(-\Delta)^{\frac{n-1}{2}}\left(x^l\right)=U(x_0,r)+\underline{v}V(x_0,r),
\end{align*}
which shows that $(-\Delta)^{(n-1)/2}(x^l)$ is an axially monogenic function.
The proof is complete.
\end{proof}

The mapping $\beta$ that maps monomials of a complex variable to monogenic monomials in the Clifford setting through the Fouries multiplier can be easily extended to {\it real-value-shifted monomials} $z_a^l=(z-a)^l, z_a=z-a,
a\in \mathbb{R}, l\in \mathbb{Z}.$ The mapping $\beta$ can thus be further extended to holomorphic intrinsic functions of one complex variable via Laurent series expansions. We have the following

\begin{theorem}[The Fueter Mapping Axial Form Theorem]\label{t3.4}
Let $n\in\mathbb{N}$ and  $f_0(z)$ a holomorphic intrinsic function defined on an intrinsic set in $\mathbb{C}.$
Then the Fueter mapping $\beta,$ primarily defined for real-value-shifted monomials,
can be extended to $f_0$ via its Laurent series expansion:
\begin{align*}
\beta(f_0)(x)=\sum_{l\in\mathbb{Z}}a_l\beta(z_a^l)(x)=\sum_{l\in\mathbb{Z}}a_l(-\Delta)^{\frac{n-1}{2}}(x_a^l),
\end{align*}
where $x_a=x-a.$
Moreover, $\beta(f_0)(x)$ is an axially monogenic function.
\end{theorem}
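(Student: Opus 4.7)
The plan is to assemble the theorem from three pieces: single-monomial axial monogenicity, translation invariance in the real shift $a$, and a uniform-convergence argument for the Laurent series. First, for an unshifted monomial $z^l$, axial monogenicity of $\beta(z^l)=(-\Delta)^{(n-1)/2}(x^l)$ is already in hand for every $n\in\mathbb{N}$ and every $l\in\mathbb{Z}$: when $n$ is odd this is part of Sce's original pointwise computation, when $n$ is even and $l\in\mathbb{Z}\setminus\mathbb{N}_0$ it is due to Qian \cite{q1}, and when $n$ is even and $l\in\mathbb{N}_0$ it is precisely Theorem \ref{t3.2}. Since the Cauchy-Riemann operator $D$ and the Fourier multiplier defining $(-\Delta)^{(n-1)/2}$ are both translation-invariant, a short check at the level of induced functions yields $\beta(z_a^l)(x)=\beta(z^l)(x-a)$. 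A real translation leaves $\underline{x}$ untouched and only shifts $x_0$, so it converts an axial form in $y=x-a$ into an axial form in $x$, centered at the translated axis.

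Second, linearity of $\beta$ and reality of the Laurent coefficients $a_l$ immediately give that every finite partial sum $\sum_{|l|\leq N}a_l\beta(z_a^l)$ is axially monogenic, its scalar and vector components being real-linear combinations of the corresponding components of each $\beta(z_a^l)$. For the full series I would use that the Laurent expansion of $f_0$ converges uniformly on compact subsets of its annulus of convergence around $a$, which transfers to the induced set $\vec{O}\subset\mathbb{R}^{n+1}$. Writing $\beta(z_a^l)(x)=U_l(x_0-a,|\underline{x}|)+(\underline{x}/|\underline{x}|)\,V_l(x_0-a,|\underline{x}|)$ with real $U_l,V_l$, the limit takes the form
\begin{align*}
\beta(f_0)(x)=U(x_0-a,|\underline{x}|)+\frac{\underline{x}}{|\underline{x}|}\,V(x_0-a,|\underline{x}|),
\end{align*}
with $U:=\sum_l a_lU_l$ and $V:=\sum_l a_lV_l$, which is manifestly axial once termwise convergence is justified. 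Monogenicity of the sum $\beta(f_0)$ is already established in \cite{KQS2002,q1}, so only this convergence-plus-axial-form step genuinely needs new input.

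The main obstacle is therefore the quantitative estimate on $U_l$ and $V_l$ required to dominate the coefficient sequence $(a_l)$ on compact subsets of $\vec{O}$. For $n$ odd the required bound is immediate from the explicit pointwise formula for $\Delta^{(n-1)/2}(x_a^l)$ as a polynomial-type expression in $x_a$ and $\overline{x_a}$. For $n$ even the bound is extracted from the convolution/Riesz-potential representation obtained in the proof of Theorem \ref{t3.2}, namely $\beta(z_a^l)(x)=(2\pi\gamma_{0,1})^{-1}(|\cdot|^{-n}\ast(-\Delta)^{n/2}(y_a^l))(x)$, by combining the known growth of $(-\Delta)^{n/2}(y_a^l)$ with standard estimates for the Riesz kernel against annular regions. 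Once these bounds are in place, the interchange of summation with the Fourier multiplier is routine, the identity $\beta(f_0)=\sum_l a_l\beta(z_a^l)$ follows, and the limit retains the axial shape displayed above.
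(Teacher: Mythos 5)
Your proposal is correct and follows essentially the same route as the paper: reduce to the shifted monomials, invoke Theorem \ref{t3.2} (together with Sce's computation for odd $n$ and Qian's result for negative powers) for termwise axial monogenicity, and pass to the series by comparing the growth of $(-\Delta)^{(n-1)/2}(x_a^l)$ with the radius of convergence of the original Laurent expansion. You are in fact somewhat more explicit than the paper about the translation-invariance check and the domination estimates needed for termwise convergence; the paper simply asserts that the degree shift by a constant independent of $l$ leaves the convergence radius unchanged.
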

\begin{proof}
Corresponding to holomorphic intrinsic functions are Laurent series of one-complex variable centered at points on the real-axis with real coefficients. Such Laurent series through the mapping $\beta$ is formally mapped into a series as a real-linear combination of the terms  $(-\Delta)^{(n-1)/2}(x_a^l).$ Those terms, inherited from the pointwise differentiation computation for the odd dimensions $n;$ and through the Fourier integral computation for the even dimensions $n,$ as given in the proof of the last theorem, have in both cases the the singularity $|x_a|^{l-(n-1)/2}$ at the infinity or alternatively at the point $a.$ Those singularities do not change the convergence radius as one for the original one-complex variable Laurent series. We thus obtain Clifford monogenic series of the same convergence radius at the infinity and local. The axial form assertion is a consequence of the last theorem.
\end{proof}

\begin{remark}
Theorem \ref{t3.4} is a reformulation of, and refined results of those in (\cite{s} and \cite{KQS2002,q1}).
\end{remark}

Theorem \ref{t3.4} shows that for $n$ being even the images of the Fueter mapping $\beta,$ as monogenic functions,
like for the odd $n$ cases proved through computation on the pointwise differential operator, are also of the axial type.
From the Remark $4.16$ in \cite{dkqs}, we have that the Fueter mapping $\beta$ is surjective on the set
of all left- and right-monogenic functions of the axial type in axial domains.
For details, see below.
\begin{theorem}[The Fueter Mapping Surjectivity Theorem]\label{t4.12}
Let $n\in\mathbb{N},$ $\Omega\subset\mathbb{R}^{n+1}$ be an axially symmetric open set, and
$f(y)=f(y_0+\underline{\omega}r)=A(y_0,r)+\underline{\omega}B(y_0,r)$ an axially monogenic function defined on $\Omega.$
For each $\underline{\omega}\in \mathbb{S}^{n-1}$ let $\Gamma_{\underline{\omega}}$ be the boundary of an open and bounded set $\mathcal{V}_{\underline{\omega}} \subset\mathbb{R}+\underline{\omega}\mathbb{R}^{+},$
and $V:=\cup_{\underline{\omega}\in \mathbb{S}^{n-1}}\mathcal{V}_{\underline{\omega}} \subset\Omega$.
Suppose that $\Gamma_{\underline{\omega}}$ is a regular curve whose parametric equations in the upper complex plane
$\mathbb{C}^+_{\underline{\omega}}=\{y_0+\underline{\omega}r,\ y_0\in \mathbb{R}, r\in{\mathbb R}^+\}$ are parameterized by the arc length $s, s\in[0,L], L>0,$ as $y_{0}=y_{0}(s), r=r(s).$
Then, there exists a holomorphic intrinsic function $f_0(z)$ defined on $\mathbb{C}\backslash\{\rm{i}, -\rm{i}\}$ such that for all $x\in V,$
\begin{align*}
\beta\left(f_0\right)(x)=f(x),
\end{align*}
where
\begin{align*}
f_0(z):=&\int_{\Gamma_{\underline{\omega}}}\mathcal{P}_{n}^{-}\left(\frac{z-y_{0}}{r}\right)r^{n-2}[{\rm d}y_{0}A(y_{0},r)-{\rm d}rB(y_{0},r)]\\
&-\int_{\Gamma_{\underline{\omega}}}\mathcal{P}_{n}^{+}\left(\frac{z-y_{0}}{r}\right)
r^{n-2}[{\rm d}y_{0}B(y_{0},r)+{\rm d}rA(y_{0},r)],
\end{align*}
and $\mathcal{P}_{n}^{\pm}(z)$ will be defined in Lemma \ref{l3.3}.
\end{theorem}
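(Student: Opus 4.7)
The plan is to construct $f_{0}$ explicitly via the given contour integral and verify two things: $(i)$ $f_{0}$ is a holomorphic intrinsic function on $\mathbb{C}\setminus\{\mathrm{i},-\mathrm{i}\}$; and $(ii)$ $\beta(f_{0})=f$ on $V$. The argument follows \cite{dkqs}, using that, by Theorem \ref{t3.4}, the Fueter mapping sends holomorphic intrinsic Laurent data termwise to axially monogenic functions with the same convergence behavior, so one can legitimately apply $\beta$ under the contour integral in the definition of $f_{0}$.

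For the holomorphy claim, I would inspect the kernels $\mathcal{P}_{n}^{\pm}\bigl((z-y_{0})/r\bigr)$ of Lemma \ref{l3.3}. Their only $z$-singularities arise at $(z-y_{0})/r=\pm\mathrm{i}$, i.e.\ at $z=y_{0}\pm\mathrm{i}r$, which traces out compact images of $\Gamma_{\underline{\omega}}$ in the upper and lower half-planes as $(y_{0},r)$ varies; differentiation under the integral sign then yields holomorphy of $f_{0}$ off $\{\mathrm{i},-\mathrm{i}\}$. The intrinsic property $\overline{f_{0}(z)}=f_{0}(\overline{z})$ follows from the conjugation symmetry $\overline{\mathcal{P}_{n}^{\pm}(z)}=\mathcal{P}_{n}^{\pm}(\overline{z})$ (read off directly from Lemma \ref{l3.3}) together with the reality of $A$, $B$, $y_{0}$, $r$ and the real parametrization of $\Gamma_{\underline{\omega}}$.

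The core step is the identity $\beta(f_{0})(x)=f(x)$ for $x\in V$. I would commute $\beta$ with the integration over $\Gamma_{\underline{\omega}}$; this is justified analytically by the smoothness and uniform decay of the parameter dependence on $(y_{0},r)$, and conceptually by Theorem \ref{t3.4}, which shows that the Fourier-multiplier definition of $(-\Delta)^{(n-1)/2}$ preserves axial form and respects the termwise action on Laurent series. Applied to a single kernel $z\mapsto \mathcal{P}_{n}^{\pm}\bigl((z-y_{0})/r\bigr)$, the Fueter map produces, up to normalizing constants, the axially monogenic Cauchy-type kernel at the point $y_{0}+\underline{\omega}r$. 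Integrating $f$ against this family of kernels along $\Gamma_{\underline{\omega}}$ then reproduces $f(x)$ by the axial Cauchy-type reproducing formula established in \cite{dkqs}; this reproducing formula is precisely the content of Remark~$4.16$ there.

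The principal obstacle is the identification $\beta(\mathcal{P}_{n}^{\pm})$ equals the axial Cauchy kernel. For $n$ odd this is a direct, if tedious, differentiation using $\Delta^{(n-1)/2}$ pointwise; for $n$ even, one must interpret $(-\Delta)^{(n-1)/2}$ via its Fourier multiplier and show it still yields the same axial-form output. This is precisely the content made available by Theorem \ref{t3.4}: the Fourier-multiplier interpretation produces an axially monogenic image with the same analytic structure as in the odd case. With this in hand, the argument reduces to a clean application of \cite[Remark 4.16]{dkqs}, and no further case distinction between odd and even $n$ is needed.
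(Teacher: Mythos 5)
The first thing to say is that the paper itself offers no proof of this theorem: it is imported wholesale from Remark~4.16 of \cite{dkqs}, and the text surrounding the statement consists only of that citation together with the observation that Theorem~\ref{t3.4} guarantees the image of $\beta$ lies in the axial class. So your sketch is a reconstruction of the \cite{dkqs} argument rather than an alternative to anything in this paper. As a reconstruction it follows the intended route: construct $f_0$ by the contour integral, use $\beta\left(\mathcal{P}^{\pm}_n\right)=\mathcal{K}^{\pm}_n$ (which is exactly Lemma~\ref{l3.3} of this paper), commute $\beta$ with the integration over $\Gamma_{\underline{\omega}}$, and close with the axial Cauchy-type reproducing formula from \cite{dkqs}. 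Since the paper defers the entire proof to that reference, deferring the reproducing formula to it is not a fault in itself.

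Two steps in your sketch, however, are glossed in ways that matter. First, Theorem~\ref{t3.4} cannot do the work you assign to it: it concerns termwise application of $\beta$ to a Laurent series with real coefficients, which is not the same as interchanging $\beta$ with a contour integral over a parameter family of kernels; the interchange has to be justified by linearity and continuity of the Fourier-multiplier operator on the relevant function class, independently of Theorem~\ref{t3.4}. Second, and more concretely, when you write that $\beta$ applied to $z\mapsto\mathcal{P}^{\pm}_n\bigl((z-y_0)/r\bigr)$ gives the Cauchy-type kernel ``up to normalizing constants,'' the factor is not a constant: by the translation invariance and the degree-$(n-1)$ homogeneity of $(-\Delta)^{(n-1)/2}$ one gets
\begin{align*}
\beta\Bigl(\mathcal{P}^{\pm}_n\bigl((\cdot-y_0)/r\bigr)\Bigr)(x)=r^{-(n-1)}\,\mathcal{K}^{\pm}_n\Bigl(\frac{x-y_0}{r}\Bigr),
\end{align*}
and it is precisely the cancellation of this $r^{-(n-1)}$ against the weight $r^{n-2}$ in the integrand that makes the reproducing identity close. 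If you do not track this $r$-dependence the final identity $\beta(f_0)=f$ does not come out with the stated normalization, so this covariance computation should be made explicit rather than absorbed into an unspecified constant.
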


\section{The Fueter Mapping Monomial Theorem}
In this section, let $n\in\mathbb{N},$ we will give an important formula of
the fractional Laplacian $(-\Delta)^{(n-1)/2}\left(x^l\right)$ with $l\in\mathbb{Z}.$
Before proving it, we need to prove several auxiliary lemmas.
The first lemma below comes from \cite{dkqs} and \cite{q1}.
\begin{lemma}\label{l3.2}
Let $n, k\in\mathbb{N}.$

{\rm (i)} Let $f\in\mathcal{S}(\mathbb{R}^{n+1}),$ then
\begin{align*}
\partial_{x_0}\left[(-\Delta)^{\frac{n-1}{2}}f(x)\right]&=(-\Delta)^{\frac{n-1}{2}}\left[\partial_{x_0}f(x)\right];\\
D\left[(-\Delta)^{\frac{n-1}{2}}f(x)\right]&=(-\Delta)^{\frac{n-1}{2}}\left[Df(x)\right].
\end{align*}

{\rm (ii)}  For all $x\in\mathbb{R}^{n+1}\backslash\{0\},$
\begin{align*}
\beta\left((\cdot)^{-k}\right)(x)=\frac{(-1)^{k-1}\omega_n\lambda_n}{(k-1)!}\cdot\left((\partial_0)^{k-1}E\right)(x),
\end{align*}
where the constant $\lambda_n=2^{n-1}\gamma_{1,n}/\gamma_{1,1}=2^{n-1}\Gamma^2\left((n+1)/2\right).$
\end{lemma}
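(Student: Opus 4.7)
The plan is to prove (i) as a straightforward Fourier-multiplier commutativity, and then to derive (ii) by reducing to the $k=1$ case via (i) together with the identity $z^{-k}=\tfrac{(-1)^{k-1}}{(k-1)!}\partial_z^{k-1}(z^{-1})$.

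For (i), I would note that $(-\Delta)^{(n-1)/2}$ is by definition the Fourier multiplier with scalar symbol $(2\pi|\xi|)^{n-1}$, while $\partial_{x_0}$ has symbol $2\pi\mathrm{i}\xi_0$ and $D$ has the paravector-valued symbol $2\pi\mathrm{i}(\xi_0+\underline{\xi})$. A scalar-valued symbol commutes pointwise with any Clifford-valued symbol under multiplication, so the two composed multipliers coincide; for $f\in\mathcal{S}(\mathbb{R}^{n+1})$ all products are honest Schwarz functions, and the claim follows by inverting the Fourier transform. The same reasoning extends verbatim to tempered distributions, which is what will be used in (ii).

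For (ii), the first step is to identify the induced function $\vec{z^{-1}}$. Writing $z^{-1}=x_0/(x_0^2+y_0^2)-\mathrm{i}\,y_0/(x_0^2+y_0^2)$ and applying the substitution $y_0\mapsto|\underline{x}|$ and $\mathrm{i}\mapsto\underline{x}/|\underline{x}|$ that defines the induced function yields $\vec{z^{-1}}(x)=\bar{x}/|x|^2=x^{-1}$. Next, since for a holomorphic $f_0$ one has $\partial_z f_0=\partial_{x_0}f_0$, the induction commutes with $\partial_{x_0}$, so $\vec{z^{-k}}(x)=\tfrac{(-1)^{k-1}}{(k-1)!}\partial_{x_0}^{k-1}(x^{-1})$. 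Applying $(-\Delta)^{(n-1)/2}$ and invoking (i) to commute $\partial_{x_0}^{k-1}$ past the fractional Laplacian reduces the whole claim to the single identity
\begin{align*}
(-\Delta)^{\frac{n-1}{2}}(x^{-1})=\omega_n\lambda_n\,E(x).
\end{align*}

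This last identity I would verify by a one-step Fourier computation. Lemma \ref{l3.1} applied component-wise with $k=1,\,\alpha=n$ gives $\mathcal{F}[\bar{x}/|x|^2](\xi)=\gamma_{1,n}\bar{\xi}/|\xi|^{n+1}$; multiplication by the symbol $(2\pi|\xi|)^{n-1}$ produces $(2\pi)^{n-1}\gamma_{1,n}\bar{\xi}/|\xi|^2$. A second application of Lemma \ref{l3.1} with $k=1,\,\alpha=1$ gives $\mathcal{F}[E](\xi)=(\gamma_{1,1}/\omega_n)\bar{\xi}/|\xi|^2$, and matching yields $(-\Delta)^{(n-1)/2}(x^{-1})=(2\pi)^{n-1}\omega_n(\gamma_{1,n}/\gamma_{1,1})E(x)$. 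The main (mild) obstacle is to reconcile this prefactor with the claimed $\omega_n\lambda_n$, $\lambda_n=2^{n-1}\Gamma^2((n+1)/2)$: plugging the explicit form of $\gamma_{k,\alpha}$ in, the $\pi$-powers contribute $\pi^{((n+1)/2-n)-((n+1)/2-1)}=\pi^{1-n}$, which exactly cancels the $\pi^{n-1}$ hidden inside $(2\pi)^{n-1}$, while the Gamma quotient collapses to $\Gamma^2((n+1)/2)$ via $\Gamma(1/2+n/2)=\Gamma((n+1)/2)$ (occurring twice in the numerator) against $\Gamma(1)=1$ in the denominator. Everything else is definitional or a formal commutation, so this bookkeeping of constants is the only slightly delicate point.
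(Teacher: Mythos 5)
Your proposal is correct. The paper itself gives no proof of this lemma --- it is simply imported from \cite{dkqs} and \cite{q1} --- so there is nothing to compare line by line; but your argument is exactly the standard derivation underlying those references: part (i) is the commutativity of the scalar multiplier $(2\pi|\xi|)^{n-1}$ with the symbols of $\partial_{x_0}$ and $D$, and part (ii) reduces via $\vec{z^{-1}}=x^{-1}$, the identity $z^{-k}=\tfrac{(-1)^{k-1}}{(k-1)!}\partial_z^{k-1}(z^{-1})$, and the fact that induction intertwines $\partial_z$ with $\partial_{x_0}$, to the single computation $(-\Delta)^{(n-1)/2}(x^{-1})=\omega_n\lambda_n E(x)$, which you carry out correctly by two applications of Lemma \ref{l3.1} (with $k=1$, $\alpha=n$ and $k=1$, $\alpha=1$). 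Your constant bookkeeping is right and in fact lands on $(2\pi)^{n-1}\gamma_{1,n}/\gamma_{1,1}=2^{n-1}\Gamma^2((n+1)/2)$, which agrees with the second expression for $\lambda_n$ in the statement; note that with the paper's own normalization of $\gamma_{k,\alpha}$ the first expression $2^{n-1}\gamma_{1,n}/\gamma_{1,1}$ carries an extra factor $\pi^{1-n}$ and is consistent only with the $\pi$-free normalization of $\gamma$ used in \cite{q1}, so your computation actually identifies which of the two printed forms of $\lambda_n$ is the operative one. The only point to state a little more carefully is that (i) is applied not to a Schwartz function but to $x^{-1}$ and its $x_0$-derivatives; this is harmless, as you observe, because their Fourier transforms are the locally integrable functions $(2\pi\mathrm{i}\xi_0)^{k-1}\gamma_{1,n}\overline{\xi}/|\xi|^{n+1}$, for which multiplication by $(2\pi|\xi|)^{n-1}$ and the commutation are pointwise identities of functions, and equality of the resulting tempered distributions gives the claimed pointwise equality on $\mathbb{R}^{n+1}\backslash\{0\}$.
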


The next lemma is crucial to prove the equivalence of two axially monogenic functions.
Before introducing it, we need some notations.
Let $k\in\mathbb{N}_0.$ A monogenic homogeneous polynomial $P_k$ of degree $k$ in $\mathbb{R}^n$
is called a solid inner spherical monogenic of degree $k.$
We denote by $M_k$ the totality of all solid inner spherical monogenics of degree $k.$
\begin{lemma}[see {\cite[Theorem 2.1]{ss}}]\label{l3.6}
Let $n\in\mathbb{N},$ $P_k(\underline{x})\in M_k$ be fixed, and $W_0(x_0)$ a real analytic function in $\widetilde{\Omega}\subset\mathbb{R}.$
Then there exists a unique sequence of analytic functions, $\{W_s(x_0)\}_{s>0},$  such that the series
\begin{align*}
f(x_0,\underline{x})=\sum_{s=0}^{\infty}\underline{x}^sW_s(x_0)P_k(\underline{x})
\end{align*}
is convergent in an open set $U$ in $\mathbb{R}^{n+1}$ containing the set $\widetilde{\Omega},$ and its sum $f$
is monogenic in $U.$ The function $W_0(x_0)$ is determined by the relation
\begin{align*}
P_k(\underline{\omega})W_0(x_0)=\lim_{|\underline{x}|\rightarrow0}\frac{1}{|\underline{x}|^k}f(x_0,\underline{x}),\
\underline{\omega}=\frac{\underline{x}}{|\underline{x}|}\in\mathbb{S}^{n-1}.
\end{align*}
The series $f(x_0,\underline{x})$ is the generalized axial CK-extension of the function $W_0(x_0).$
\end{lemma}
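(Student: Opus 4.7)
The plan is to substitute the formal series
\[
f(x_0,\underline{x})=\sum_{s=0}^{\infty}\underline{x}^{s}W_{s}(x_{0})P_{k}(\underline{x})
\]
into the monogenicity equation $Df=(\partial_{x_0}+D_{\underline{x}})f=0$, derive a recursion that determines the $W_{s}$ uniquely from $W_{0}$, solve it in closed form, and then use real-analyticity of $W_{0}$ to establish convergence on an open neighborhood $U\supset\widetilde{\Omega}$ in $\mathbb{R}^{n+1}$. The radial identification of $W_{0}$ drops out of the $s=0$ term.

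The key input consists of two Clifford identities for $D_{\underline{x}}(\underline{x}^{s}P_{k})$. Since $W_{s}(x_{0})$ is scalar-valued and $P_{k}$ is monogenic, termwise
\[
D\bigl(\underline{x}^{s}W_{s}P_{k}\bigr)=\underline{x}^{s}W_{s}'P_{k}+W_{s}\,D_{\underline{x}}(\underline{x}^{s}P_{k}).
\]
Using $\underline{x}^{2m}=(-1)^{m}|\underline{x}|^{2m}\in\mathbb{R}$ together with $D_{\underline{x}}|\underline{x}|^{2m}=2m|\underline{x}|^{2m-2}\underline{x}$, the even-index identity $D_{\underline{x}}(\underline{x}^{2m}P_{k})=-2m\,\underline{x}^{2m-1}P_{k}$ follows at once; the odd-index identity $D_{\underline{x}}(\underline{x}^{2m+1}P_{k})=-(n+2k+2m)\,\underline{x}^{2m}P_{k}$ follows from the Clifford product rule $D_{\underline{x}}(\underline{x}w)=-nw-\underline{x}D_{\underline{x}}w-2E_{\underline{x}}w$ applied to the degree-$(2m+k)$ homogeneous $w=\underline{x}^{2m}P_{k}$, where $E_{\underline{x}}=\sum_j x_j\partial_j$. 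Matching the coefficients of $\underline{x}^{2m}P_{k}$ and $\underline{x}^{2m+1}P_{k}$ in $Df=0$ yields the decoupled recursion
\[
W_{2m}'=(n+2k+2m)\,W_{2m+1},\qquad W_{2m+1}'=2(m+1)\,W_{2m+2},
\]
which iterates in closed form to
\[
W_{2m}=\frac{W_{0}^{(2m)}}{4^{m}\,m!\,(n/2+k)_{m}},\qquad W_{2m+1}=\frac{W_{0}^{(2m+1)}}{2^{2m+1}\,m!\,(n/2+k)_{m+1}},
\]
where $(a)_{m}=a(a+1)\cdots(a+m-1)$ is the Pochhammer symbol; uniqueness is immediate from this derivation.

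For convergence, real-analyticity of $W_{0}$ on $\widetilde{\Omega}$ provides, on any compact $K\subset\widetilde{\Omega}$, bounds $|W_{0}^{(s)}(x_{0})|\leq C_{K}\,s!/R^{s}$ for some $R>0$. The duplication identity $(2m)!=4^{m}m!(1/2)_{m}$ reduces the even-index ratio to $(1/2)_{m}/(n/2+k)_{m}$, which by Stirling's formula is $O\bigl(m^{-(n-1)/2-k}\bigr)$ and hence bounded; a parallel estimate handles the odd-index terms. Consequently, each summand is majorized by a constant multiple of $(|\underline{x}|/R)^{s}|P_{k}(\underline{x})|$ on $K$, the series converges absolutely and uniformly on the tube $U=\{x_{0}\in K,\ |\underline{x}|<R\}$, and termwise differentiation shows $f$ is monogenic in $U$. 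The radial formula $P_{k}(\underline{\omega})W_{0}(x_{0})=\lim_{|\underline{x}|\to 0}|\underline{x}|^{-k}f(x_{0},\underline{x})$ is then immediate since $P_{k}(\underline{x})=|\underline{x}|^{k}P_{k}(\underline{\omega})$ and every $s\geq 1$ summand carries a residual factor $|\underline{x}|^{s}$ that vanishes after division by $|\underline{x}|^{k}$. The main obstacle is precisely the sharpness of this convergence estimate: the factorial denominators produced by the recursion only just absorb the factorial growth of $W_{0}^{(s)}$, so the matching of constants through the duplication formula must be executed with care; verifying the odd-case Dirac identity, where Clifford anticommutation intervenes in the product rule, is the secondary technical step.
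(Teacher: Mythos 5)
Your proposal is correct, but note that the paper offers no proof of this lemma at all: it is imported verbatim from De Schepper--Sommen \cite[Theorem 2.1]{ss}, so there is nothing internal to compare against. Your argument is the standard generalized axial CK-extension proof and the details check out: the identities $D_{\underline{x}}(\underline{x}^{2m}P_k)=-2m\,\underline{x}^{2m-1}P_k$ and $D_{\underline{x}}(\underline{x}^{2m+1}P_k)=-(n+2k+2m)\,\underline{x}^{2m}P_k$ are correct (the latter via $D_{\underline{x}}(\underline{x}w)=-nw-\underline{x}D_{\underline{x}}w-2E_{\underline{x}}w$ with $E_{\underline{x}}w=(2m+k)w$), the recursion and its closed-form solution are right (I verified $W_2=W_0''/(2(n+2k))$ and $W_3=W_0'''/(2(n+2k)(n+2k+2))$ against your Pochhammer expressions), and the duplication identity $(2m)!=4^m m!(1/2)_m$ does reduce the growth question to boundedness of $(1/2)_m/(n/2+k)_m$, which holds since $n/2+k\geq 1/2$. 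Two points you pass over quickly but which are easily supplied: the uniqueness of the coefficients $W_s$ rests on the terms $\underline{x}^sP_k$ being homogeneous of pairwise distinct degrees $s+k$ in $\underline{x}$, and the set $U$ must be assembled as a union of tubes over an exhaustion of $\widetilde{\Omega}$ by compacts since the radius $R$ depends on $K$. Neither is a gap.
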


We need the two important functions.
\begin{definition}\label{d3.8}
Let $n\in\mathbb{N}$ and $E(x)$ be the Cauchy kernel. For all $x\in\mathbb{R}^{n+1}\backslash\mathbb{S}^{n-1},$
\begin{align*}
\mathcal{K}^+_n(x):=\int_{\mathbb{S}^{n-1}}E(x-\underline{\omega})\, {\rm d}S(\underline{\omega})
\end{align*}
and
\begin{align*}
\mathcal{K}^-_n(x):=\int_{\mathbb{S}^{n-1}}E(x-\underline{\omega})\, \underline{\omega}\, {\rm d}S(\underline{\omega}),
\end{align*}
where ${\rm d}S(\underline{\omega})$ is the surface measure on $\mathbb{S}^{n-1}.$
\end{definition}

\begin{lemma}
Let $n\in\mathbb{N}$ and $x\in\mathbb{R}^{n+1}\backslash\mathbb{S}^{n-1}.$
Then the kernels $\mathcal{K}^+_n(x)$ and $\mathcal{K}^-_n(x)$ are determined by their restrictions
$\lim_{|\underline{x}|\rightarrow0}\mathcal{K}^+_n(x)$ and $\lim_{|\underline{x}|\rightarrow0}\mathcal{K}^-_n(x),$ respectively.
Specifically, $x_0\in\mathbb{R},$
\begin{align*}
\lim_{|\underline{x}|\rightarrow0}\mathcal{K}^+_n(x)&=C_n\frac{x_0}{(x^2_0+1)^{(n+1)/2}},\\
\lim_{|\underline{x}|\rightarrow0}\mathcal{K}^-_n(x)&=-C_n\frac{1}{(x^2_0+1)^{(n+1)/2}},
\end{align*}
where
\begin{align*}
C_n:=\frac{\Gamma[(n+1)/2]}{\sqrt{\pi}\Gamma(n/2)}.
\end{align*}
\end{lemma}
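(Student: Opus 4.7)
The plan is to proceed in three stages: (i) establish that $\mathcal{K}_n^{\pm}$ are axially monogenic on $\mathbb{R}^{n+1}\setminus\mathbb{S}^{n-1}$; (ii) invoke the axial CK-extension uniqueness of Lemma~\ref{l3.6} to reduce the determination claim to evaluating the restriction to the real axis $\underline{x}=0$; and (iii) compute those restrictions directly.

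For step (i), monogenicity is immediate by differentiation under the integral sign: $E(x-\underline{\omega})$ is two-sided monogenic in $x$ for $x\neq\underline{\omega}$, and on the open set $\mathbb{R}^{n+1}\setminus\mathbb{S}^{n-1}$ the integrand and all its $x$-derivatives are continuous uniformly in $\underline{\omega}\in\mathbb{S}^{n-1}$. To extract the axial form I would write $\overline{x-\underline{\omega}} = (x_0-\underline{x})+\underline{\omega}$ and $|x-\underline{\omega}|^2 = x_0^2+|\underline{x}|^2+1-2\underline{x}\cdot\underline{\omega}$, set $\underline{v}:=\underline{x}/|\underline{x}|$, and parametrize $\underline{\omega} = \rho\underline{v}+(1-\rho^2)^{1/2}\underline{\omega}'$ with $\underline{\omega}'\in\mathbb{S}^{n-2}$ in the subspace orthogonal to $\underline{v}$, exactly as in the proof of Theorem~\ref{t3.2}. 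Integrating first over $\underline{\omega}'$ kills every Clifford component orthogonal to the real-linear span of $\{1,\underline{v}\}$. In particular, the higher-grade part of the product $\overline{(x-\underline{\omega})}\,\underline{\omega}$ appearing in $\mathcal{K}_n^-$ vanishes after this inner integration, and each kernel acquires the axial form $A_{\pm}(x_0,|\underline{x}|)+\underline{v}\,B_{\pm}(x_0,|\underline{x}|)$.

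For step (ii), Lemma~\ref{l3.6} applied with $k=0$ and $P_0\equiv 1$ says that any axially monogenic function on an axially symmetric open set containing a segment of the real axis equals the unique CK-extension of its restriction to that segment. Since the real axis lies entirely in $\mathbb{R}^{n+1}\setminus\mathbb{S}^{n-1}$, this applies to both $\mathcal{K}_n^{\pm}$ and yields the claimed determination.

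For step (iii), setting $\underline{x}=0$ makes $|x_0-\underline{\omega}|^{n+1}=(x_0^2+1)^{(n+1)/2}$ constant in $\underline{\omega}$, so it factors out of both integrals. The remaining spherical integrals $\int_{\mathbb{S}^{n-1}}(x_0+\underline{\omega})\,{\rm d}S(\underline{\omega})$ and $\int_{\mathbb{S}^{n-1}}(x_0+\underline{\omega})\underline{\omega}\,{\rm d}S(\underline{\omega})$ collapse under $\int_{\mathbb{S}^{n-1}}\underline{\omega}\,{\rm d}S=0$ and $\underline{\omega}^2=-1$ to $x_0\omega_{n-1}$ and $-\omega_{n-1}$ respectively. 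Dividing by $\omega_n$ and simplifying $\omega_{n-1}/\omega_n = \Gamma[(n+1)/2]/(\sqrt{\pi}\,\Gamma(n/2)) = C_n$ produces the stated formulas. The main obstacle I anticipate is step (i): the monogenicity is routine but the grade-by-grade verification of the axial form for $\mathcal{K}_n^-$ requires careful Clifford bookkeeping, whereas (ii) and (iii) are essentially symbolic once (i) is in hand.
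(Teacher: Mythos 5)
Your proposal is correct and follows the same overall skeleton as the paper's proof: both establish that $\mathcal{K}^{\pm}_n$ are axially monogenic, both invoke Lemma~\ref{l3.6} to reduce the determination claim to the restriction on the real axis, and both then compute that restriction. The difference lies in where the computational work is placed. The paper first puts the full integrals into axial form by the $\underline{\omega}=\rho\underline{v}+(1-\rho^2)^{1/2}\underline{\omega}'$ parametrization (splitting $\mathcal{K}^+_n$ into $I_1+I_2$, reducing to one-dimensional $\rho$-integrals with the weight $(1-\rho^2)^{(n-3)/2}$), and only then lets $r=|\underline{x}|\to0$, evaluating the resulting Beta-type integral $\int_{-1}^{1}(1-\rho^2)^{\alpha}\,{\rm d}\rho=\sqrt{\pi}\,\Gamma(\alpha+1)/\Gamma(\alpha+3/2)$ to produce $C_n$. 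You instead pass to $\underline{x}=0$ inside the original sphere integral (legitimate, since the integrand $E(x-\underline{\omega})$ is jointly continuous for $x$ near the real axis, which avoids $\mathbb{S}^{n-1}$), where $|x_0-\underline{\omega}|^{n+1}$ becomes constant and the whole computation collapses to $\int_{\mathbb{S}^{n-1}}\underline{\omega}\,{\rm d}S=0$, $\underline{\omega}^2=-1$, and $\omega_{n-1}/\omega_n=C_n$. This is genuinely more economical and avoids the Gamma-function bookkeeping entirely; what the paper's longer route buys is the explicit axial representation of $\mathcal{K}^{\pm}_n$ at all $r$, which is reused in spirit elsewhere (e.g., in the proof of Theorem~\ref{t3.2}). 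You are also more careful than the paper on the axial-form step: the paper simply asserts axial monogenicity ``because $E$ is axially monogenic,'' whereas the axial symmetry really only emerges after averaging over $\underline{\omega}\in\mathbb{S}^{n-1}$, and your grade-by-grade argument (the bivector part $\underline{x}\wedge\underline{\omega}$ being odd in $\underline{\omega}'$ and integrating to zero) is the honest justification.
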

\begin{proof}
$\mathcal{K}^+_n(x)$ and $\mathcal{K}^-_n(x)$ are axially monogenic functions because the Cauchy kernel $E(x)$ is axially monogenic.
By Lemma \ref{l3.6}, we can conclude that $\mathcal{K}^+_n(x)$ and $\mathcal{K}^-_n(x)$ are determined by their restrictions
$\lim_{|\underline{x}|\rightarrow0}\mathcal{K}^+_n(x)$ and $\lim_{|\underline{x}|\rightarrow0}\mathcal{K}^-_n(x),$ respectively.

Let $x=x_0+\underline{v}r,$ where $r=|\underline{x}|, \underline{v}\in\mathbb{S}^{n-1}.$ We have
\begin{align*}
\mathcal{K}^+_n(x)=&\int_{\mathbb{S}^{n-1}}E(x-\underline{\omega}){\rm d}S(\underline{\omega})\\
=&:I_1(x_0,r,\underline{v})+I_2(x_0,r,\underline{v}),
\end{align*}
where
\begin{align*}
I_1(x_0,r,\underline{v}):=\frac{1}{\omega_n}\int_{\mathbb{S}^{n-1}}
\frac{x_0}{\left(x^2_{0}+|r\underline{v}-\underline{\omega}|^2\right)^{(n+1)/2}}{\rm d}S(\underline{\omega})
\end{align*}
and
\begin{align*}
I_2(x_0,r,\underline{v}):=-\frac{1}{\omega_n}\int_{\mathbb{S}^{n-1}}
\frac{r\underline{v}-\underline{\omega}}{\left(x^2_{0}+|r\underline{v}-\underline{\omega}|^2\right)^{(n+1)/2}}{\rm d}S(\underline{\omega}).
\end{align*}

Because $r\underline{v}-\underline{\omega}=(r-\rho)\underline{v}+(\rho\underline{v}-\underline{\omega}),$ we have
\begin{align*}
|r\underline{v}-\underline{\omega}|^2=(r-\rho)^2+(1-\rho^2)=1+r^2-2r\rho.
\end{align*}
So
\begin{align*}
I_1(x_0,r,\underline{v})&=\frac{1}{\omega_n}\int_{-1}^{1}\int_{\mathbb{S}^{n-2}}
\frac{x_0}{\left(x^2_0+1+r^2-2r\rho\right)^{(n+1)/2}}(1-\rho^2)^{\frac{n-3}{2}}{\rm d}S(\underline{\omega}'){\rm d}\rho\\
&=\frac{x_0\omega_{n-2}}{\omega_n}\int_{-1}^{1}\frac{1}{\left(x^2_0+1+r^2-2r\rho\right)^{(n+1)/2}}(1-\rho^2)^{\frac{n-3}{2}}{\rm d}\rho,
\end{align*}
and
\begin{align*}
\lim_{r\rightarrow0}I_1(x_0,r,\underline{v})
&=\frac{x_0\omega_{n-2}}{\omega_n}\lim_{r\rightarrow0}\int_{-1}^{1}\frac{1}{\left(x^2_0+1+r^2-2r\rho\right)^{(n+1)/2}}(1-\rho^2)^{\frac{n-3}{2}}{\rm d}\rho\\
&=\frac{\omega_{n-2}}{\omega_n}\frac{x_0}{\left(x^2_0+1\right)^{(n+1)/2}}\int_{-1}^{1}(1-\rho^2)^{\frac{n-3}{2}}{\rm d}\rho.
\end{align*}
Note that, for every positive real number $\alpha,$ there holds
\begin{align*}
\int_{-1}^{1}(1-\rho^2)^{\alpha}{\rm d}\rho=\sqrt{\pi}\frac{\Gamma(\alpha+1)}{\Gamma\left(\alpha+\frac{3}{2}\right)}.
\end{align*}
Hence, we have
\begin{align*}
\lim_{r\rightarrow0}I_1(x_0,r,\underline{v})=C_n\frac{x_0}{\left(x^2_0+1\right)^{(n+1)/2}}.
\end{align*}

Now we compute $I_2(x_0,r,\underline{v}).$ Again using the relation
$$r\underline{v}-\underline{\omega}=(r-\rho)\underline{v}+(\rho\underline{v}-\underline{\omega}),$$
the integral is separated into two parts.
By the anti-symmetry of
$$-(1-\rho^2)^{1/2}\underline{\omega}'=\rho\underline{v}-\underline{\omega}$$
over the $n-2$ dimensional sphere centered at zero with radius $(1-\rho^2)^{1/2},$ we have
\begin{align*}
\int_{-1}^{1}\int_{\mathbb{S}^{n-2}}\frac{-(1-\rho^2)^{1/2}\underline{\omega}'}
{\left(x^2_0+1+r^2-2r\rho\right)^{(n+1)/2}}(1-\rho^2)^{\frac{n-3}{2}}{\rm d}S(\underline{\omega}'){\rm d}\rho=0.
\end{align*}
Thus, we have
\begin{align*}
I_2(x_0,r,\underline{v})=&-\frac{1}{\omega_n}\underline{v}\int_{-1}^{1}\int_{\mathbb{S}^{n-1}}
\frac{\rho-r}{\left(x^2_0+1+r^2-2r\rho\right)^{(n+1)/2}}(1-\rho^2)^{\frac{n-3}{2}}{\rm d}S(\underline{\omega}'){\rm d}\rho\\
=&-\frac{\underline{v}\omega_{n-2}}{\omega_n}\int_{-1}^{1}\frac{\rho-r}{\left(x^2_0+1+r^2-2r\rho\right)^{(n+1)/2}}(1-\rho^2)^{\frac{n-3}{2}}{\rm d}\rho,
\end{align*}
and
\begin{align*}
\lim_{r\rightarrow0}I_2(x_0,r,\underline{v})
&=-\frac{\underline{v}\omega_{n-2}}{\omega_n}\lim_{r\rightarrow0}\int_{-1}^{1}\frac{\rho-r}{\left(x^2_0+1+r^2-2r\rho\right)^{(n+1)/2}}(1-\rho^2)^{\frac{n-3}{2}}{\rm d}\rho\\
&=-\frac{\underline{v}\omega_{n-2}}{\omega_n}\frac{x_0}{\left(x^2_0+1\right)^{(n+1)/2}}\int_{-1}^{1}\rho(1-\rho^2)^{\frac{n-3}{2}}{\rm d}\rho\\
&=0.
\end{align*}
Therefore,
\begin{align*}
\lim_{|\underline{x}|\rightarrow0}\mathcal{K}^+_n(x)&=C_n\frac{x_0}{(x^2_0+1)^{(n+1)/2}}.
\end{align*}
A similar computation gives
\begin{align*}
\lim_{|\underline{x}|\rightarrow0}\mathcal{K}^-_n(x)&=-C_n\frac{1}{(x^2_0+1)^{(n+1)/2}}.
\end{align*}
\end{proof}

\begin{remark}\label{r3.5}
The holomorphic intrinsic functions $\widetilde{\mathcal{P}}^+_n(z)$ and $\widetilde{\mathcal{P}}^-_n(z)$
are constructed with the restrictions of $\mathcal{K}^+_n(x)$ and $\mathcal{K}^-_n(x),$ respectively.
First, let $z\in\mathbb{C}\backslash\{\rm{i}, -\rm{i}\},$ replacing $x_0$ by $z$ in the restrictions of
$\mathcal{K}^+_n(x)$ and $\mathcal{K}^-_n(x),$ we have
\begin{align*}
C_n\frac{z}{(1+z^2)^{(n+1)/2}}\ \text{and}\ -C_n\frac{1}{(1+z^2)^{(n+1)/2}}.
\end{align*}
Second, denote
\begin{align*}
\widetilde{\mathcal{P}}^+_n(z)&:=C_n\cdot D^{-(n-1)}_{z}\left\{\frac{z}{(1+z^2)^{(n+1)/2}}\right\},\\
\widetilde{\mathcal{P}}^-_n(z)&:=-C_n\cdot D^{-(n-1)}_{z}\left\{\frac{1}{(1+z^2)^{(n+1)/2}}\right\},
\end{align*}
where $D^{-(n-1)}_{z}$ stands for the $(n-1)$-fold antiderivative operation with respect to variable $z.$
Last, it is easy to see that $\widetilde{\mathcal{P}}^+_n(z)$ and $\widetilde{\mathcal{P}}^-_n(z)$
are holomorphic intrinsic functions on $\mathbb{C}\backslash\{\rm{i}, -\rm{i}\}.$
\end{remark}

Now we present the main lemma of this section.
\begin{lemma}\label{l3.3}
Let $n\in\mathbb{N},$ $\widetilde{\mathcal{P}}^+_n(z)$ and $\widetilde{\mathcal{P}}^-_n(z)$
be the holomorphic intrinsic functions defined on $\mathbb{C}\backslash\{\rm{i}, -\rm{i}\}.$
Let $\lambda'_n:=(-1)^{n-1}\lambda_n/(n-1)!,$ denote $\mathcal{P}^+_n(z):=\widetilde{\mathcal{P}}^+_n(z)/\lambda'_n$
and $\mathcal{P}^-_n(z):=\widetilde{\mathcal{P}}^-_n(z)/\lambda'_n.$
Then, for each $x\in\mathbb{R}^{n+1}\backslash\mathbb{S}^{n-1},$
\begin{align*}
\beta\left(\mathcal{P}^+_n\right)(x)&=\mathcal{K}^+_n(x),\\
\beta\left(\mathcal{P}^-_n\right)(x)&=\mathcal{K}^-_n(x).
\end{align*}
\end{lemma}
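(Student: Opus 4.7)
The plan is to prove both identities by invoking the uniqueness of the axial CK-extension in Lemma~\ref{l3.6} (applied with $k=0$ and $P_0\equiv 1$): an axially monogenic function is uniquely determined by its trace $\lim_{|\underline{x}|\to 0}F(x)$ on the real axis, separately on each connected component of its domain. Both sides of the claimed identities live on $\mathbb{R}^{n+1}\setminus\mathbb{S}^{n-1}$, so it suffices to establish axial monogenicity of each side and to match the real-axis traces on the two components $\{|x|<1\}$ and $\{|x|>1\}$.

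For the axial monogenicity of $\mathcal{K}^\pm_n$, monogenicity in $x$ follows by differentiation under the integral sign (the Cauchy kernel $E(x-\underline{\omega})$ is monogenic in $x$ whenever $x\notin\mathbb{S}^{n-1}$), while the axial form is forced by the $SO(n)$-invariance of the surface measure ${\rm d}S(\underline{\omega})$ under simultaneous rotations of $\underline{x}$ and $\underline{\omega}$. For $\beta(\mathcal{P}^\pm_n)$, the Axial Form Theorem (Theorem~\ref{t3.4}) applies directly since $\mathcal{P}^\pm_n$ are holomorphic intrinsic on $\mathbb{C}\setminus\{\mathrm{i},-\mathrm{i}\}$. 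The traces of $\mathcal{K}^\pm_n$ are already supplied by the preceding lemma as $C_n x_0/(x_0^2+1)^{(n+1)/2}$ and $-C_n/(x_0^2+1)^{(n+1)/2}$ respectively.

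By the construction in Remark~\ref{r3.5}, the matching of traces reduces to the auxiliary identity
\[
(-\Delta)^{(n-1)/2}\bigl(\vec{D_z^{-(n-1)}g_0}\bigr)(x)\big|_{\underline{x}=0}=\lambda'_n\,g_0(x_0)
\]
applied to $g_0(z)=z/(1+z^2)^{(n+1)/2}$ (for $\mathcal{P}^+_n$) and $g_0(z)=1/(1+z^2)^{(n+1)/2}$ (for $\mathcal{P}^-_n$). I would prove this by Laurent-expanding $g_0$ in a chosen component of its intrinsic domain, applying Lemma~\ref{l3.2}(ii) termwise to convert each negative-power monomial into a derivative of the Cauchy kernel, and evaluating at $\underline{x}=0$ via $E(x_0,0)=\mathrm{sign}(x_0)/(\omega_n|x_0|^n)$. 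For a single monomial $g_0(z)=z^{-k}$ with $k\geq n$ one uses $D_z^{-(n-1)}z^{-k}=(-1)^{n-1}\,\frac{(k-n)!}{(k-1)!}\,z^{-(k-n+1)}$; a short differentiation together with the relation $\lambda_n=\pi(n-1)!\,C_n$ (which follows from the Legendre duplication formula) produces exactly $\lambda'_n\,x_0^{-k}$, and the full identity in the exterior region $|x|>1$ then follows by summing the convergent Laurent series.

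The main obstacle is the interior region $|x|<1$, where expanding $g_0$ about $z=0$ yields only non-negative powers, so after $(n-1)$-fold antidifferentiation one encounters $\beta(z^k)$ for $k\geq n-1$, which is not directly given by Lemma~\ref{l3.2}(ii). I would handle this by working with the convolution representation $(-\Delta)^{(n-1)/2}f=(2\pi\gamma_{0,1})^{-1}\,((-\Delta)^{n/2}f)\ast|\cdot|^{-n}$ developed in the proof of Theorem~\ref{t3.2}, which reduces the trace evaluation on $\{|x|<1\}$ to an explicit convolution integral that can be matched with the spherical integral defining $\mathcal{K}^\pm_n$ restricted to the real axis. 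Once the traces agree in both components, Lemma~\ref{l3.6} forces $\beta(\mathcal{P}^\pm_n)=\mathcal{K}^\pm_n$ throughout $\mathbb{R}^{n+1}\setminus\mathbb{S}^{n-1}$.
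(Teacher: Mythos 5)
Your overall strategy---show both sides are axially monogenic, then match their restrictions to the real axis and invoke the uniqueness of the generalized axial CK-extension (Lemma \ref{l3.6} with $k=0$, $P_0\equiv 1$)---is exactly the paper's, and your treatment of the exterior region reproduces the paper's computation: expand $\widetilde{\mathcal{P}}^{\pm}_n$ in negative powers of $z$ for $|z|>1$, apply Lemma \ref{l3.2}(ii) termwise, and evaluate at $\underline{x}=0$ using $E(x_0)=\mathrm{sign}(x_0)/(\omega_n|x_0|^{n})$. Your per-monomial identity $\beta\bigl(\vec{D_z^{-(n-1)}(\cdot)^{-k}}\bigr)\big|_{\underline{x}=0}=\lambda'_n\,x_0^{-k}$ for $k\geq n$ checks out; incidentally it does not require the relation $\lambda_n=\pi(n-1)!\,C_n$ (which is true, but plays no role once the factor $C_n$ is simply carried along inside $\widetilde{\mathcal{P}}^{\pm}_n$ and cancelled at the end).

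The one genuine gap is precisely the step you flag as ``the main obstacle,'' the interior region. The workaround you propose there---computing the trace on $\{|x|<1\}$ via the convolution representation $(-\Delta)^{(n-1)/2}f=(2\pi\gamma_{0,1})^{-1}\bigl((-\Delta)^{n/2}f\bigr)\ast|\cdot|^{-n}$ and ``matching'' the resulting integral with the spherical integral defining $\mathcal{K}^{\pm}_n$---is not carried out, and it is not routine: it would require the explicit axial form of $(-\Delta)^{n/2}(y^l)$ and a nontrivial integral identity. Fortunately the entire step is unnecessary, because your topological premise is off: $\mathbb{S}^{n-1}$ here is the unit sphere of $\mathbb{R}^{n}\subset\mathbb{R}^{n+1}$ (contained in the hyperplane $x_0=0$), not the set $\{|x|=1\}$ in $\mathbb{R}^{n+1}$. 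It has codimension $2$, so $\mathbb{R}^{n+1}\setminus\mathbb{S}^{n-1}$ is \emph{connected} (and the real axis lies entirely inside it). Since $\beta(\mathcal{P}^{\pm}_n)$ and $\mathcal{K}^{\pm}_n$ are monogenic, hence real-analytic, on this connected open set, their agreement on the open subset $\{|x|>1\}$ already forces agreement everywhere; this is the unstated reason the paper reduces the lemma to the case $|x|>1$ and never touches an interior expansion. Replace your interior argument by this observation and your proof closes.
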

\begin{proof}
We only prove $\beta\left(\mathcal{P}^+_n\right)(x)=\mathcal{K}^+_n(x)$ for $x\in\mathbb{R}^{n+1}\backslash\mathbb{S}^{n-1}.$
By a similar method, one can show $\beta\left(\mathcal{P}^-_n\right)(x)=\mathcal{K}^-_n(x)$ for $x\in\mathbb{R}^{n+1}\backslash\mathbb{S}^{n-1}.$
The Fueter mapping theorem states that $\beta\left(\mathcal{P}^-_n\right)(x)$ is axially monogenic in $x\in\mathbb{R}^{n+1}\backslash\mathbb{S}^{n-1}.$
So the lemma is correct if we can show
\begin{align*}
\beta\left(\mathcal{P}^+_n\right)(x)=\mathcal{K}^+_n(x),\ \ |x|>1.
\end{align*}

When $|z|>1,$ we have
\begin{align*}
\widetilde{\mathcal{P}}^+_n(z)
=&C_n\cdot D^{-(n-1)}_{z}\left\{\frac{z}{(1+z^2)^{(n+1)/2}}\right\}\\
=&C_n\cdot D^{-(n-1)}_{z}\left\{zz^{-(n+1)}\sum_{k=0}^{\infty}
\left(\begin{array}{c}
-\frac{n+1}{2} \\
k \\
\end{array}
\right)z^{-2k}\right\}\\
=&C_n\cdot D^{-(n-1)}_{z}\left\{\sum_{k=0}^{\infty}
\left(\begin{array}{c}
-\frac{n+1}{2} \\
k \\
\end{array}
\right)z^{-(2k+n)}\right\}\\
=&C_n\cdot\sum_{k=0}^{\infty}
\left(\begin{array}{c}
-\frac{n+1}{2} \\
k \\
\end{array}
\right)\frac{(-1)^{n-1}(2k)!}{(2k+n-1)!}z^{-(2k+1)},
\end{align*}
Then we have
\begin{align*}
\mathcal{P}^+_n(z)=\frac{C_n}{\lambda'_n}\cdot\sum_{k=0}^{\infty}
\left(\begin{array}{c}
-\frac{n+1}{2} \\
k \\
\end{array}
\right)\frac{(-1)^{n-1}(2k)!}{(2k+n-1)!}z^{-(2k+1)}.
\end{align*}
Because the power series expressing $\mathcal{P}^+_n(z)$ converges absolutely, and uniformly on the set $\{z:\ |z|\geq\rho,\ \rho >1\},$
we can apply the operator $\beta$ to the both sides of the power series expressing $\mathcal{P}^+_n(z)$ and have
\begin{align*}
\beta\left(\mathcal{P}^+_n\right)(x)=&\frac{C_n}{\lambda'_n}\cdot\sum_{k=0}^{\infty}
\left(\begin{array}{c}
-\frac{n+1}{2} \\
k \\
\end{array}
\right)\frac{(-1)^{n-1}(2k)!}{(2k+n-1)!}\beta\left((\cdot)^{-(2k+1)}\right)(x).
\end{align*}
We know that $\beta\left(\mathcal{P}^+_n\right)$ and $\mathcal{K}^+_n$ are axially monogenic functions.
In order to show that $\beta\left(\mathcal{P}^+_n\right)(x)=\mathcal{K}^+_n(x)$ for $|x|>1,$
by Lemma \ref{l3.6}, we just need to prove
\begin{align*}
\lim_{|\underline{x}|\rightarrow0}\beta\left(\mathcal{P}^+_n\right)(x)=\lim_{|\underline{x}|\rightarrow0}\mathcal{K}^+_n(x),\ \ |x|>1.
\end{align*}

By Lemma \ref{l3.2}, we compute $\beta\left((\cdot)^{-(2k+1)}\right)(x).$
\begin{align}\label{4.4}
\beta\left((\cdot)^{-(2k+1)}\right)(x)=\frac{\omega_n\lambda_n}{(2k)!}\cdot\left((\partial_0)^{2k}E\right)(x).
\end{align}
Take the limit $|\underline{x}|\rightarrow0$ to the both sides of $(\ref{4.4})$ we have
\begin{align*}
\lim_{|\underline{x}|\rightarrow0}\beta\left((\cdot)^{-(2k+1)}\right)(x)
&=\frac{\omega_n\lambda_n}{(2k)!}\cdot\lim_{|\underline{x}|\rightarrow0}\left((\partial_0)^{2k}E\right)(x)\\
&=\frac{\omega_n\lambda_n}{(2k)!}\cdot\left((\partial_0)^{2k}E\right)(x_0)\\
&=\frac{\lambda_n}{(2k)!}\cdot\frac{(n+2k-1)!}{(n-1)!}x_0^{-(2k+n)}\\
&=\frac{\lambda_n(n+2k-1)!}{(2k)!(n-1)!}x_0^{-(2k+n)},
\end{align*}
where the second equality is obtained by the continuously differentiable
function $E$ and the third equality is due to the definition of the partial derivative.

Let $\lambda'_n=(-1)^{n-1}\lambda_n/(n-1)!,$ we have
\begin{align*}
\lim_{|\underline{x}|\rightarrow0}&\beta\left(\mathcal{P}^+_n\right)(x)
=\frac{C_n}{\lambda'_n}\cdot\sum_{k=0}^{\infty}
\left(\begin{array}{c}
-\frac{n+1}{2} \\
k \\
\end{array}
\right)\frac{(-1)^{n-1}(2k)!}{(2k+n-1)!}\lim_{|\underline{x}|\rightarrow0}\beta\left((\cdot)^{-(2k+1)}\right)(x)\\
&=\frac{C_n}{\lambda'_n}\cdot\sum_{k=0}^{\infty}
\left(\begin{array}{c}
-\frac{n+1}{2} \\
k \\
\end{array}
\right)\frac{(-1)^{n-1}\lambda_n(2k)!(n+2k-1)!}{(2k+n-1)!(2k)!(n-1)!}x_0^{-(2k+n)}\\
&=C_n\cdot x_0^{-n}\sum_{k=0}^{\infty}
\left(\begin{array}{c}
-\frac{n+1}{2} \\
k \\
\end{array}
\right)x^{-2k}_0\\
&=C_n\cdot\frac{x_0}{(1+x^2_0)^{(n+1)/2}}\\
&=\lim_{|\underline{x}|\rightarrow0}\mathcal{K}^+_n(x).
\end{align*}
This concludes the proof.
\end{proof}

Now we give an important formula of
the fractional Laplacian $(-\Delta)^{(n-1)/2}\left(x^l\right)$ with $l\geq 0.$
For a function $f(x)$ defined on $\mathbb{R}^{n+1},$ the Kelvin inversion $I$ is denoted by
$$I(f)(x):=(-1)^{n-1}\omega_{n}E(x)f(1/x).$$
Let $n,k\in\mathbb{N},$ denote the monogenic monomials $P^{(-k)}$ and $P^{(k-1)}$ respectively by

\begin{align*}
P^{(-k)}:&=\frac{(-1)^{k-1}\omega_n\lambda_n}{(k-1)!}\cdot\left((\partial_0)^{k-1}E\right),\\
         &=\beta\left((\cdot)^{-k}\right),\\
P^{(k-1)}:&=I(P^{(-k)}).
\end{align*}
In 1997, Qian obtained the following theorem through complicated computation for odd $n$ (\cite{q1}).
Our proof here is based on a different method and for all $n$.
\begin{theorem}[The Fueter Mapping Monomial Theorem]\label{t5.1}
Let $n\in\mathbb{N}$ and $l\in\mathbb{N}_0.$
Then, for $x\in\mathbb{R}^{n+1},$
\begin{align*}
\beta\left((\cdot)^l\right)(x)=(-\Delta)^{\frac{n-1}{2}}(x^l)
=\begin{cases}
P^{(l+1-n)}(x) & \mbox{if}\ l\in\mathbb{N}_0\setminus\{0, 1, 2, \cdots, n-2\},\\
0 & \mbox{if}\ l\in\{0, 1, 2, \cdots, n-2\}.
\end{cases}
\end{align*}
\end{theorem}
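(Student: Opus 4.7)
My plan is to treat the two ranges of $l$ separately.  For $l\in\{0,1,\ldots,n-2\}$ the identity $\beta(z^l)=0$ is immediate: when $n$ is even this is Theorem \ref{t3.2}, and when $n$ is odd, $(-\Delta)^{(n-1)/2}$ is a classical differential operator of order $n-1$ annihilating the polynomial $\vec{z^l}$ of degree at most $n-2$.

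For $l=n-2+k$ with $k\ge 1$, the identity $\beta(z^{n-2+k})=P^{(k-1)}$ will be proved by exhibiting both sides as axially monogenic functions on $\mathbb{R}^{n+1}$ and then applying Lemma \ref{l3.6} to reduce the identity to a comparison of real-axis restrictions.  Axial monogenicity of $\beta(z^{n-2+k})$ is Theorem \ref{t3.4}; for $P^{(k-1)}=I(P^{(-k)})$, axial monogenicity of $P^{(-k)}=\beta(z^{-k})$ (Theorem \ref{t3.4}) combines with the fact that the Kelvin inversion $I(f)(x)=(-1)^{n-1}\omega_nE(x)f(1/x)$ preserves the axial form, since $E$ is itself axially monogenic and the sphere inversion $x\mapsto\overline{x}/|x|^2$ respects the axial decomposition $x=x_0+\underline{\omega}_x|\underline{x}|$.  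By a homogeneity argument---$\vec{z^{n-2+k}}$ is homogeneous of degree $n-2+k$, $(-\Delta)^{(n-1)/2}$ decreases homogeneity by $n-1$, and $P^{(k-1)}$ is homogeneous of degree $k-1$---the real-axis restrictions of both sides are scalar multiples of $x_0^{k-1}$, so Lemma \ref{l3.6} (applied with $k=0$, $P_0\equiv 1$) reduces the identity to matching a single real constant.

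Plugging Lemma \ref{l3.2}(ii) into the Kelvin formula produces
\[ P^{(k-1)}(x_0)=\frac{(-1)^{n-1}\lambda_n(n+k-2)!}{(k-1)!(n-1)!}\,x_0^{k-1} \]
directly.  The main obstacle is the corresponding computation of $\beta(z^{n-2+k})(x_0)$.  For odd $n$, this is Qian's 1997 pointwise evaluation of the iterated Laplacian applied to $\vec{z^{n-2+k}}$ restricted to $\mathbb R$.  For even $n$, the Fourier multiplier $(2\pi|\xi|)^{n-1}$ is only $C^{n-2}$ at the origin, so its product with the distribution $\mathcal{F}(\vec{z^{n-2+k}})$ of order $n-2+k$ requires care; my plan is to use the factorization $(-\Delta)^{(n-1)/2}=(2\pi\gamma_{0,1})^{-1}|\cdot|^{-n}*(-\Delta)^{n/2}$ in the tempered distribution sense (as in the proof of Theorem \ref{t3.2}), exploit rotational symmetry around $x=x_0\in\mathbb R$ in the convolution variable to reduce the evaluation to a one-dimensional integral, and evaluate this integral in closed form---with appropriate distributional regularization when $(-\Delta)^{n/2}\vec{z^{n-2+k}}$ has small formal degree---to obtain the same constant as for $P^{(k-1)}(x_0)$.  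Lemma \ref{l3.6} then completes the proof.
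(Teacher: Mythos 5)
Your reduction strategy coincides with the paper's endgame: both sides are axially monogenic, so by Lemma \ref{l3.6} it suffices to match their restrictions to the real axis, and your Kelvin-inversion computation of $P^{(k-1)}(x_0)$ agrees with the paper's (which obtains $\frac{(-1)^{n-1}\lambda_n(n+2k)!}{(2k+1)!(n-1)!}x_0^{2k+1}$ in the odd-shift case). The gap is in the one step you leave as a ``plan'': the evaluation of $\lim_{|\underline{x}|\to 0}\beta\left((\cdot)^{n-2+k}\right)(x)$ for even $n$. That evaluation is the entire content of the theorem --- the paper explicitly introduces a new method precisely to avoid the ``complicated computation'' of Qian's 1997 paper --- and your proposed route (convolving $|\cdot|^{-n}$ against $(-\Delta)^{n/2}\vec{z^{n-2+k}}$ and evaluating a one-dimensional integral ``with appropriate distributional regularization'') meets a concrete obstruction: $(-\Delta)^{n/2}\vec{z^{n-2+k}}$ is homogeneous of degree $k-2$, so the convolution integral $\int_{\mathbb{R}^{n+1}}|x-y|^{-n}(-\Delta)^{n/2}(y^{n-2+k})\,{\rm d}y$ diverges at infinity for every $k\ge 1$. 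The regularization needed to extract a finite part, and why its constant would equal the Kelvin one, is exactly what is not supplied; asserting that the integral ``comes out to the same constant'' is assuming the conclusion.

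The paper closes this gap with a generating-function transfer that your proposal does not contain. Lemma \ref{l3.3} first establishes $\beta(\mathcal{P}^{\pm}_n)=\mathcal{K}^{\pm}_n$ by expanding $\mathcal{P}^{\pm}_n$ in negative powers of $z$ on $|z|>1$, where $\beta\left((\cdot)^{-k}\right)$ is explicitly known from Lemma \ref{l3.2}(ii). Then the same $\mathcal{P}^{+}_n$ is expanded in positive powers $z^{2k+n}$ on $|z|<1$, $\beta$ is applied termwise, and the restriction to the real axis of the resulting series is matched against the known closed form $\lim_{|\underline{x}|\to 0}\mathcal{K}^{+}_n(x)=C_n x_0(1+x_0^2)^{-(n+1)/2}$; comparing coefficients (legitimate by the homogeneity you invoke) determines each individual restriction $\lim_{|\underline{x}|\to 0}\beta\left((\cdot)^{2k+n}\right)(x)$ without ever computing a divergent integral, and the even-shift case uses $\mathcal{P}^{-}_n$ and $\mathcal{K}^{-}_n$ analogously. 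To complete your argument you would need either to supply this transfer step or to actually carry out and justify the regularized evaluation; as written, the key identity is named but not proved.
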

\begin{proof}
From the proof of Theorem \ref{t3.2}, we know that
\begin{align*}
\beta\left((\cdot)^l\right)(x)=(-\Delta)^{\frac{n-1}{2}}(x^l)=0,
\end{align*}
where $l\in\{0, 1, 2, \cdots, n-2\}$ and $n\in\mathbb{N}$ being even.
Next, we only need to prove the theorem for $l\in\mathbb{N}_0\setminus\{0, 1, 2, \cdots, n-2\}.$
We first deal with the case when $l-(n-1)$ is an odd number, that is $l-(n-1)=2k+1\ge 0$ with $k\in\mathbb{N}_0.$
We are to show
\begin{align*}
\beta\left((\cdot)^{2k+n}\right)(x)=(-\Delta)^{\frac{n-1}{2}}\left(x^{2k+n}\right)=P^{(2k+1)}(x),\ x\in\mathbb{R}^{n+1}.
\end{align*}
Similar to the proof of Lemma \ref{l3.3}, we have the power series of $\mathcal{P}^+_n(z)$ for $|z|<1$
\begin{align}\label{5.1}
\mathcal{P}^+_n(z)=\frac{C_n}{\lambda'_n}\cdot\sum_{k=0}^{\infty}
\left(\begin{array}{c}
-\frac{n+1}{2} \\
k \\
\end{array}
\right)\frac{(2k+1)!}{(2k+n)!}z^{2k+n}.
\end{align}
Because the power series $(\ref{5.1})$ converges absolutely, and uniformly on the set $\{z:\ |z|\leq\rho,\ 0<\rho<1\},$
we take the operator $\beta$ to the both sides of $(\ref{5.1})$ and have
\begin{align*}
\beta\left(\mathcal{P}^+_n\right)(x)=\frac{C_n}{\lambda'_n}\cdot\sum_{k=0}^{\infty}
\left(\begin{array}{c}
-\frac{n+1}{2} \\
k \\
\end{array}
\right)\frac{(2k+1)!}{(2k+n)!}\beta\left((\cdot)^{2k+n}\right)(x).
\end{align*}
By Lemma \ref{l3.3}, for each $x\in\mathbb{R}^{n+1}\backslash\mathbb{S}^{n-1},$ we have
\begin{align*}
\beta\left(\mathcal{P}^+_n\right)(x)&=\mathcal{K}^+_n(x).
\end{align*}
Then
\begin{align}\label{5.2}
\mathcal{K}^+_n(x)=\frac{C_n}{\lambda'_n}\cdot\sum_{k=0}^{\infty}
\left(\begin{array}{c}
-\frac{n+1}{2} \\
k \\
\end{array}
\right)\frac{(2k+1)!}{(2k+n)!}(-\Delta)^{\frac{n-1}{2}}\left(x^{2k+n}\right).
\end{align}
Take the limit $|\underline{x}|\rightarrow0$ on the both sides of $(\ref{5.2})$ we have
\begin{align*}
\frac{C_n}{\lambda'_n}&\cdot\sum_{k=0}^{\infty}\left(\begin{array}{c}
-\frac{n+1}{2} \\
k \\
\end{array}
\right)\frac{(2k+1)!}{(2k+n)!}\lim_{|\underline{x}|\rightarrow0}\left((-\Delta)^{\frac{n-1}{2}}\left(x^{2k+n}\right)\right)\\
&=\lim_{|\underline{x}|\rightarrow0}\mathcal{K}^+_n(x)\\
&=C_n\frac{x_0}{(x^2_0+1)^{(n+1)/2}}\\
&=C_n\sum_{k=0}^{\infty}\left(\begin{array}{c}
-\frac{n+1}{2} \\
k \\
\end{array}
\right)x^{2k+1}_0.
\end{align*}
Thus
\begin{align*}
\lim_{|\underline{x}|\rightarrow0}\left((-\Delta)^{\frac{n-1}{2}}\left(x^{2k+n}\right)\right)
=\frac{\lambda'_n(n+2k)!}{(2k+1)!}x^{2k+1}_0.
\end{align*}

On the other hand, for every $k, n\in\mathbb{N}$ and $x\neq0,$
\begin{align*}
P^{(2k+1)}(x)&=I\left(P^{(-(2k+2))}\right)(x)\\
&=-\frac{(-1)^{n-1}\omega^2_n\lambda_n}{(2k+1)!}E(x)\cdot\left((\partial_0)^{2k+1}E\right)(x^{-1}).
\end{align*}
Take the limit $|\underline{x}|\rightarrow0$ on the both sides of the formula:
\begin{align*}
\lim_{|\underline{x}|\rightarrow0}P^{(2k+1)}(x)
&=-\frac{(-1)^{n-1}\omega^2_n\lambda_n}{(2k+1)!}\lim_{|\underline{x}|\rightarrow0}
\left\{E(x)\cdot\left((\partial_0)^{2k+1}E\right)(x^{-1})\right\}\\
&=-\frac{(-1)^{n-1}\omega^2_n\lambda_n}{(2k+1)!}E(x_0)\cdot\left((\partial_0)^{2k+1}E\right)\left(\frac{x_0}{|x_0|^2}\right)\\
&=-\frac{(-1)^{n-1}\omega_n\lambda_n}{(2k+1)!}E(x_0)\cdot\left(-\frac{(n+2k)!}{(n-1)!}|x_0|^{n+2k+1}\right)\\
&=\frac{(-1)^{n-1}\lambda_n(n+2k)!}{(2k+1)!(n-1)!}x^{2k+1}_0,
\end{align*}
where the second equality is obtained by the continuously differentiable
function $E$ and the third equality is due to the definition of the partial derivative.

To sum up, we have
\begin{align*}
\lim_{|\underline{x}|\rightarrow0}\left(P^{(2k+1)}(x)\right)
&=\lim_{|\underline{x}|\rightarrow0}\left((-\Delta)^{\frac{n-1}{2}}\left(x^{2k+n}\right)\right)\\
&=\lim_{|\underline{x}|\rightarrow0}\beta\left((\cdot)^{2k+n}\right)(x).
\end{align*}
By Lemma \ref{l3.6}, we have
\begin{align*}
\beta\left((\cdot)^{2k+n}\right)(x)=(-\Delta)^{\frac{n-1}{2}}\left(x^{2k+n}\right)=P^{(2k+1)}(x),\ |x|<1.
\end{align*}
Since $\beta\left((\cdot)^{2k+n}\right)(x)$ and $P^{(2k+1)}(x)$ are monogenic functions in the whole $\mathbb{R}^{n+1},$ we obtain
\begin{align*}
\beta\left((\cdot)^{2k+n}\right)(x)=(-\Delta)^{\frac{n-1}{2}}\left(x^{2k+n}\right)=P^{(2k+1)}(x),\ x\in\mathbb{R}^{n+1}.
\end{align*}

Now we consider the case when $l-(n-1)=2k\ge 0$ with $k\in\mathbb{N}_0.$ We are to prove
\begin{align*}
\beta\left((\cdot)^{2k+n-1}\right)(x)=(-\Delta)^{\frac{n-1}{2}}\left(x^{2k+n-1}\right)=P^{(2k)}(x),\ x\in\mathbb{R}^{n+1}.
\end{align*}
Invoking the relation $\beta\left(\mathcal{P}^-_n\right)(x)=\mathcal{K}^-_n(x),$ the proof is similar to the case  $l-(n-1)=2k+1.$
We omit the details here and the proof is complete.
\end{proof}
\begin{remark}
Let $n\in\mathbb{N}$ and $l\in\mathbb{Z}.$ If a holomorphic intrinsic function $f_0(z)$ is expanded at $z=0,$
its Laurent series expansion has the form $\sum_{l\in\mathbb{Z}}a_lz^l,$
where $a_l$'s are real numbers.
Then by the Fueter mapping theorem, we have
\begin{align*}
\beta(f_0(z))(x)=\sum_{l\in\mathbb{Z}}a_l\beta(z^l)(x)=\sum_{l\in\mathbb{Z}}a_l(-\Delta)^{\frac{n-1}{2}}(x^l).
\end{align*}

Moreover, for any $x\in\mathbb{R}^{n+1},$ by Theorem \ref{t3.2}, Lemma \ref{l3.2} and Theorem \ref{t5.1}, we have
\begin{align*}
\beta(z^l)(x)=(-\Delta)^{\frac{n-1}{2}}(x^l)
=\begin{cases} P^{(l)}(x) & \mbox{if}\ l\in\mathbb{Z}\backslash\mathbb{N}_0,\\
P^{(l+1-n)}(x) & \mbox{if}\ l\in\mathbb{N}_0\setminus\{0, 1, 2, \cdots, n-2\},\\
0 & \mbox{if}\ l\in\{0, 1, 2, \cdots, n-2\}.
\end{cases}
\end{align*}

So, the theorem implies that the extended mapping $\tau$, induced in \cite{q1}, on Laurent series of real coefficients in one complex variable coincides with what is defined through the Fueter mapping $\beta$ based on the pointwise differentiation (odd dimensions) or the Fourier multiplier in the distribution sense (even dimensions).
\end{remark}

\noindent{\bf Acknowledgement}
Special thanks are due to Irene Sabadini who read the first draft of the note and gave valuable comments and suggestions.
The authors wish to acknowledge the support by Macao Science and Technology Foundation FDCT 099/2014/A2, and University of Macau MYRG115(Y1-L4)-FST13-QT.

\bibliographystyle{amsplain}

\end{document}